\theta\in[\frac{\pi}{3},\frac{2\pi}{3}]$. 
\frac{\pi}{3}$, this can be mapped to the self-avoiding walk on the honeycomb lattice. 
\newcommand{\loops}{\mathrm{loops}}
\begin{document}



\section{Introduction}
Self-avoiding walks (i.e. visiting each vertex at most once) 
were proposed by P.~Flory and W.\,J.\,C.~Orr \cite{F,O}. 
These walks turned out to be a very interesting object leading to rich mathematical theories, 
see~\cite{MS,BDGS}, and raising important challenges (it is difficult to understand because of its non-markovity). 
There are not  so many rigorous statements in this field, one of 
the main conjectures being convergence to~$\text{SLE} (8/3)$. 
Some progress in this direction was achieved by G.~Lawler, O.~Schramm and W.~Werner, 
who proved in~\cite{LSW} that if the scaling-limit of self-avoiding walk exists and is conformally invariant, 
then it is~$\text{SLE}(8/3)$. In~1984, B.~Nienhuis nonrigorously derived in~\cite{N82} 
that the connective constant for the hexagonal lattice equals to~$\sqrt{2+\sqrt{2}}$. 
This has been proved recently by H.~Duminil-Copin and S.~Smirnov in~\cite{DS10}. 
Since the self-avoiding walk on the square lattice does not seem to be integrable, 
it is not reasonable to expect any explicit formula for the connective constant in this case. 
Nevertheless, one can study natural variations of the model, for instance by introducing additional weights.

We fix~$\theta\in\left[\frac{\pi}{3},\frac{2\pi}{3}\right]$ and consider the self-avoiding walk 
on~$\Lambda$~--- the skewed~$\mathbb{Z}^2$ lattice with edges having length~$1$ 
and all plaquets having angles~$\theta$ and~$\pi-\theta$.

To be precise this will be a curve starting and ending at the midpoints of edges, 
intersecting edges at right angles and having in each plaquet either one straight 
line connecting two opposite edges or two arcs surrounding opposite vertices or 
one arc or just no arcs (see fig.~\ref{figWeights}). Each rhombus has a weight 
according to the configuration of arcs inside it (see fig.~\ref{figWeights}):
\begin{itemize}
\item empty plaquet has weight~$1$,
\item plaquet with an arc of angle~$\theta$ has weight~$u_1$,
\item plaquet with an arc of angle~$\pi-\theta$ has weight~$u_2$,
\item plaquet with a straight line has weight~$v$,
\item plaquet with two arcs of angle~$\theta$ has weight~$w_1$,
\item plaquet with two arcs of angle~$\pi-\theta$ has weight~$w_2$.
\end{itemize}

\begin{figure}
\centering
\begin{subfigure}{.7\textwidth}
  \centering
  \includegraphics[scale=0.85]{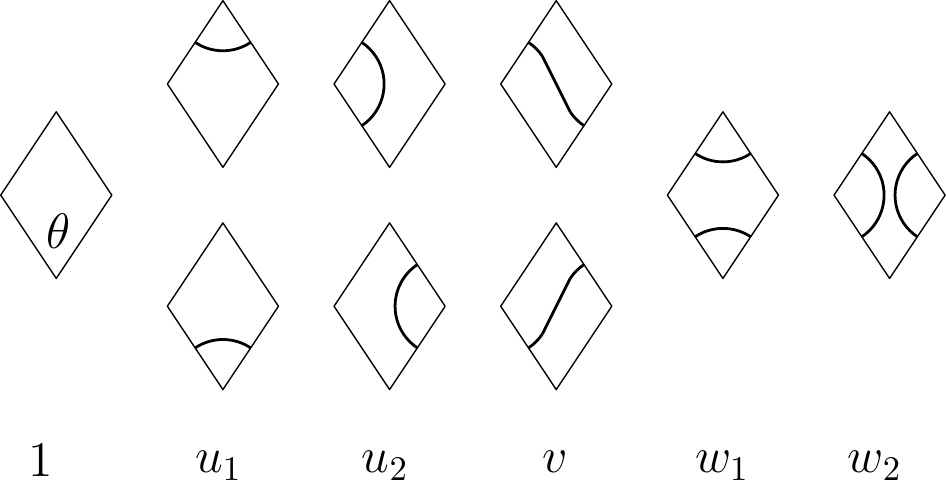}
\end{subfigure}%
\begin{subfigure}{.3\textwidth}
  \centering
  \includegraphics[scale=0.85,page=1]{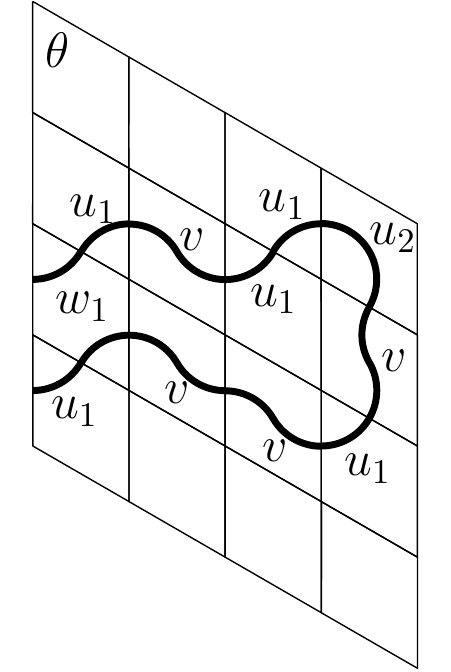}
\end{subfigure}
\caption{Different ways of passing a rhombus with their weights and an
    example of a walk of weight~$u_1(\theta)^5u_2(\theta)v(\theta)^4w_1(\theta)$
    and length~12.}
\label{figWeights}
\end{figure}

The weight of the whole walk is calculated as the product of weights of the plaquets. 
Denote one of the mid-edges of the lattice by~$0$.
The partition function is equal to the sum of the weights of all self-avoiding walks on~$\Lambda$ starting at~$0$:
\begin{align*}
\omega(\gamma)&=\prod_{r\,-\,rhombus}{\omega (r)}\, ,\\
Z(u_1,u_2,v,w_1,w_2)&=\sum_{\gamma}{{\omega(\gamma)}}\, .
\end{align*}
\noindent Let us consider
$$
\tilde{c}_n=\frac{1}{u_1^n}\sum_{|\gamma|=n}{\omega(\gamma)},
$$
where by~$|\gamma|$ we mean the number of arcs in~$\gamma$ (straight passing
of a rhombus counted as one arc).
\noindent 
By definition, we find
$$
Z(u_1,u_2,v,w_1,w_2)=\sum_{n=0}^{\infty}{\tilde{c}_n u_1^n}.
$$

We are now in a position to state our main result:

\begin{theorem}\label{th_CC}
There exists a family of weights ($u_1$, $u_2$, $v$, $w_1$, $w_2)_\theta$ 
parametrized by~$\theta\in [\frac{\pi}{3},\frac{2\pi}{3}]$ such that for these weights
$\lim_{n\to\infty}\sqrt[n]{\tilde{c}_n}$ exists and is equal to~$\frac{1}{u_1}$.

Furthermore, these weights can be calculated explicitly:

\begin{align}
u_1&=\frac{\sin(\frac{5\pi}{4})\sin(\frac{5\pi}{8}+\frac{3\theta}{8})}{\sin(\frac{5\pi}{4}+\frac{3\theta}{8})\sin(\frac{5\pi}{8}-\frac{3\theta}{8})}\, ,
\label{eq_x1c}\\
u_2&=\frac{\sin(\frac{5\pi}{4})\sin(\frac{3\theta}{8})}{\sin(\frac{5\pi}{4}+\frac{3\theta}{8})\sin(\frac{5\pi}{8}-\frac{3\theta}{8})}\, ,
\label{eq_x2c}\\
v&=\frac{\sin(\frac{5\pi}{8}+\frac{3\theta}{8})\sin(-\frac{3\theta}{8})}{\sin(\frac{5\pi}{4}+\frac{3\theta}{8})\sin(\frac{5\pi}{8}-\frac{3\theta}{8})}\, ,
\label{eq_vc}\\
w_1&=\frac{\sin(\frac{5\pi}{8}+\frac{3\theta}{8})\sin(\frac{5\pi}{4}-\frac{3\theta}{8})}{\sin(\frac{5\pi}{4}+\frac{3\theta}{8})\sin(\frac{5\pi}{8}-\frac{3\theta}{8})}\, ,
\label{eq_w1c}\\
w_2&=\frac{\sin(\frac{15\pi}{8}+\frac{3\theta}{8})\sin(-\frac{3\theta}{8})}{\sin(\frac{5\pi}{4}+\frac{3\theta}{8})\sin(\frac{5\pi}{8}-\frac{3\theta}{8})}\, .
\label{eq_w2c}
\end{align}
\end{theorem}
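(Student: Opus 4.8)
The plan is to transport the parafermionic-observable method of Duminil-Copin and Smirnov~\cite{DS10} to the weighted rhombic model, using integrability (the Yang-Baxter equation) to select the critical weights. Fix a spin $\sigma$ and a source mid-edge $0$, and for each mid-edge $z$ set
\[
F(z)=\sum_{\gamma:\,0\to z}\omega(\gamma)\,e^{-i\sigma W(\gamma)},
\]
the sum being over self-avoiding configurations from $0$ to $z$, with $\omega(\gamma)$ the product of rhombus weights and $W(\gamma)$ the total turning of $\gamma$ between its endpoints. The goal is to choose $\sigma$ and the weights $(u_1,u_2,v,w_1,w_2)_\theta$ so that $F$ is discretely holomorphic: for every rhombus $r$ the combination $\sum_{i}\lambda_i\,F(z_i)$ over the four mid-edges $z_1,\dots,z_4$ of $r$ vanishes, where the $\lambda_i$ are the complex directions of the corresponding sides.

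First I would make this local relation explicit. Grouping the configurations that reach the mid-edges of a fixed rhombus $r$ and pairing those that differ only by the arc drawn inside $r$, the left-hand side reorganises into a finite linear combination of the six weights $1,u_1,u_2,v,w_1,w_2$ and of the phases $e^{\pm i\sigma\theta}$, $e^{\pm i\sigma(\pi-\theta)}$ produced by the turnings. Requiring every such combination to cancel gives a linear system in the weights. Imposing it simultaneously for all rhombus angles is exactly the condition that the face weights lie on the integrable (Yang-Baxter) manifold, with $\theta$ playing the role of a spectral parameter; solving the system, normalised so that the empty plaquette has weight $1$ and with the self-avoiding value $\sigma=\tfrac58$, should reproduce the trigonometric formulas \eqref{eq_x1c}--\eqref{eq_w2c}. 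A useful consistency check is that at $\theta=\tfrac{\pi}{3}$ the family must degenerate to the honeycomb walk of~\cite{DS10}.

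Next I would globalise the identity. Summing the rhombus relation over all faces of a finite domain $\Omega_T$ adapted to the skewed lattice --- a strip (or trapezoid) of height $T$ --- the interior contributions telescope and only boundary mid-edges remain, yielding an identity $\alpha\,A_T+\beta\,B_T+\gamma\,E_T=\mathrm{const}$. Here $A_T,B_T,E_T$ are the partial generating functions of configurations ending on the bottom, top and lateral boundary, and $\alpha,\beta,\gamma$ are explicit real trigonometric coefficients coming from the boundary directions. Sending the width to infinity first reduces this to a relation between half-plane walks and bridges of height $T$.

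It remains to deduce $\lim_n\sqrt[n]{\tilde c_n}=1/u_1$, equivalently $\lim_n\sqrt[n]{C_n}=1$ for $C_n=\sum_{|\gamma|=n}\omega(\gamma)$. Before anything else I must check that all weights in \eqref{eq_x1c}--\eqref{eq_w2c} are positive for $\theta\in[\tfrac{\pi}{3},\tfrac{2\pi}{3}]$, which both pins down the admissible range and secures the concatenation (submultiplicativity) estimates that make $\lim_n\sqrt[n]{C_n}$ exist. Boundedness of $A_T$ as $T\to\infty$, forced by the identity, shows that the weighted half-plane generating function converges at fugacity $1$ and hence gives $\limsup_n\sqrt[n]{C_n}\le1$. \textbf{The main obstacle is the matching lower bound} $\liminf_n\sqrt[n]{C_n}\ge1$: as in~\cite{DS10} it cannot be read off the identity directly, and must be obtained from a monotonicity argument on the strip generating functions combined with a (weighted) bridge decomposition, showing that $C_n$ cannot decay exponentially. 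Together with the upper bound and the existence of the limit, this proves the theorem.
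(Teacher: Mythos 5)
Your outline follows the paper's strategy (the paper proves exactly this: Lemma~\ref{lemWeights} solves the local cancellation system and forces $\sigma=\ell/8$, Lemmas~\ref{lem_relation_parallelogramm}--\ref{lem_E_T_is_0} give the strip identity \eqref{eq_relation_strip}, and Lemma~\ref{LemAtC} plus Hammersley--Welsh finish the proof), but two of your steps have genuine gaps, and they are precisely where this model differs from~\cite{DS10}. First, you assert that positivity of the weights ``secures the concatenation (submultiplicativity) estimates''. It does not: here a walk may traverse the same rhombus twice, so when a walk is cut into two pieces, a rhombus carrying one arc of each piece contributes $w_1$ (resp.\ $w_2$) to the weight of the whole walk but $u_1^2$ (resp.\ $u_2^2$) to the product of the pieces' weights; the weight is not multiplicative under splitting. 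Submultiplicativity of $\tilde c_n$ (hence existence of the limit), the splitting of the walks counted in $A_{T+1}-A_T$ into two bridges, and the weighted Hammersley--Welsh decomposition all require the inequalities $u_1^2\ge w_1$ and $u_2^2\ge w_2$ (the paper's \eqref{ineq1}--\eqref{ineq2}), which hold for \eqref{eq_x1c}--\eqref{eq_w2c} exactly on $\theta\in[\frac{\pi}{3},\frac{2\pi}{3}]$ and fail outside; together with the endpoint modification of bridges (appending an arc, costing a bounded factor) and the fact that only central, not axial, symmetry is available, this is the main technical content beyond~\cite{DS10}, and your proposal omits it entirely.

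Second, your upper bound is a non sequitur as stated: boundedness of $A_T$ shows that the generating function of half-plane walks \emph{ending on the boundary} is finite at $x_c$, but this says nothing directly about $\limsup_n\sqrt[n]{\tilde c_n}$, which involves all walks. The correct route (the paper's) is: \eqref{eq_relation_strip} gives $B_T(x_c)\le 1$, hence $B_T(x)\le(x/x_c)^T$ for $x<x_c$, and then the Hammersley--Welsh decomposition of an arbitrary walk into bridges of distinct widths (valid here only thanks to \eqref{ineq1}--\eqref{ineq2} and the endpoint surgery) yields $Z(x)\le\prod_{T>0}\left(1+cB_T(x)\right)^2<\infty$, so $\lim_n\sqrt[n]{\tilde c_n}\le 1/x_c$. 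In other words, the bridge decomposition you assign to the lower bound actually belongs to the upper bound; the lower bound instead comes from $B_T(x_c)-B_{T+1}(x_c)=c_\alpha\left(A_{T+1}(x_c)-A_T(x_c)\right)$ and from splitting each walk counted in $A_{T+1}-A_T$ into two bridges of width $T+1$, which gives $B_T(x_c)\ge\frac{1}{T}\min(B_1(x_c),c)$ and hence $Z(x_c)\ge\sum_T B_T(x_c)=\infty$. You correctly flag this lower bound as the main obstacle but leave it unexecuted (you also leave unproved the vanishing of the lateral terms $D_{T,L},E_{T,L}$ as $L\to\infty$, which the paper gets from an injection into $A_{T,L+1}-A_{T,L}$), so as it stands the proposal establishes neither direction of the theorem.
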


\begin{theorem}
\label{th_different_length}
Consider another way to define~$|\gamma|$: a~$\theta$-arc has length~1,
a~$(\pi-\theta)-arc$ and a straight segment have any positive integer length
(possibly different from each other).
Then Theorem~\ref{th_CC} remains true,
i.\,e. the limit of~$\sqrt[n]{\tilde{c}_n}$ is equal to~$\frac{1}{u_1}$.
\end{theorem}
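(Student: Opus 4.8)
The plan is to recognise both Theorem~\ref{th_CC} and the present statement as assertions about the radius of convergence of a single walk-generating function, and then to transfer the information from Theorem~\ref{th_CC} by an elementary coefficientwise comparison. For a walk $\gamma$ let $a_1,a_2,b,d_1,d_2$ count the plaquets of the five non-trivial types, in the order $u_1,u_2,v,w_1,w_2$, so that $\omega(\gamma)=u_1^{a_1}u_2^{a_2}v^{b}w_1^{d_1}w_2^{d_2}$. The length used in Theorem~\ref{th_CC} is $|\gamma|=a_1+a_2+b+2d_1+2d_2$, while the new one is $|\gamma|'=a_1+\ell_2a_2+\ell_vb+2d_1+2\ell_2d_2$, where $\ell_2,\ell_v\in\{1,2,3,\dots\}$ are the lengths assigned to a $(\pi-\theta)$-arc and to a straight segment. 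Writing $G(x)=\sum_\gamma\omega(\gamma)x^{|\gamma|}$ and $G'(x)=\sum_\gamma\omega(\gamma)x^{|\gamma|'}$, one has $G(x)=Z(u_1x,u_2x,vx,w_1x^2,w_2x^2)$ and $G'(x)=Z(u_1x,u_2x^{\ell_2},vx^{\ell_v},w_1x^2,w_2x^{2\ell_2})$: the same sum over the same walks with the same critical weights, only the power of $x$ changing. Since $c_n:=\sum_{|\gamma|=n}\omega(\gamma)=[x^n]G(x)$ and $\tilde c_n=c_n/u_1^n$, the identity $\lim\sqrt[n]{\tilde c_n}=1/u_1$ is equivalent to $\lim\sqrt[n]{c_n}=1$, i.e.\ to $G$ having radius of convergence $1$ together with the existence of this limit; the same reformulation with $c'_n:=[x^n]G'(x)$ applies to the present theorem. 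Theorem~\ref{th_CC} supplies both facts for $G$, and the task is to carry them over to $G'$.

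For the value I would argue by direct comparison. Inspecting~\eqref{eq_x1c}--\eqref{eq_w2c} shows that all five weights are non-negative on $[\frac{\pi}{3},\frac{2\pi}{3}]$, so $\omega(\gamma)\ge 0$ and the two series may be compared term by term. A $\theta$-arc keeps length $1$ in both conventions, while every other feature has length at least its old value ($\ell_2,\ell_v\ge 1$); hence $|\gamma|\le|\gamma|'$ for every $\gamma$. Consequently, for $0<x<1$ each term satisfies $x^{|\gamma|'}\le x^{|\gamma|}$, so $G'(x)\le G(x)<\infty$, whereas for $x>1$ we have $x^{|\gamma|'}\ge x^{|\gamma|}$, so $G'(x)\ge G(x)=+\infty$, since $G$ diverges beyond its radius of convergence $1$. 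Thus $G'$ also has radius of convergence $1$, that is $\limsup\sqrt[n]{c'_n}=1$, equivalently $\limsup\sqrt[n]{\tilde c_n}=1/u_1$ in the new normalisation.

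The genuinely delicate point, which this comparison does \emph{not} settle, is the \emph{existence} of the limit, as a radius-of-convergence estimate controls only the $\limsup$. I would recover it by re-running the subadditivity (Fekete) argument that underlies Theorem~\ref{th_CC}: it rests only on the combinatorics of the walks and on the weights $\omega$, both untouched here, and on the fact that length is an additive functional of the counts $a_1,a_2,b,d_1,d_2$ when a walk is cut into two pieces. The sole correction---a rhombus traversed twice whose two arcs fall on opposite sides of the cut, so that a factor $w_1$ (resp.\ $w_2$) is replaced by $u_1^2$ (resp.\ $u_2^2$) and the recorded length shifts by a bounded amount---is exactly the one already present for $|\gamma|$, and it is insensitive to the passage to $|\gamma|'$, which is again additive in the same counts. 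Once subadditivity of $\log c'_n$ (up to subexponential corrections) is established, the limit exists and, being equal to its $\limsup$, equals $1$; that is $\lim\sqrt[n]{\tilde c_n}=1/u_1$. I expect this transfer of the subadditive structure, rather than the radius comparison itself, to be the main obstacle.
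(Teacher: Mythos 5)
Your proposal is correct in outline, and its first half takes a genuinely different route from the paper. The paper's entire proof of Theorem~\ref{th_different_length} is one sentence: the argument for Theorem~\ref{th_CC} never uses the specific definition of~$|\gamma|$, so all of Sections~\ref{section_parafermion}--\ref{section_proof} are meant to be re-run verbatim with the new length --- length enters only through the scaling $\omega_x(\gamma)=(x/x_c)^{|\gamma|}\omega_c(\gamma)$, the bound $B_T(x)\le(x/x_c)^T B_T(x_c)$ (a width-$T$ bridge has length at least~$T$ in either convention), and the submultiplicativity step; the core lemmas (\eqref{eqTL}, Lemma~\ref{lem_E_T_is_0}, Lemma~\ref{LemAtC}) live entirely at $x=x_c$, where no notion of length appears. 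Your coefficientwise comparison $|\gamma|\le|\gamma|'$, combined with nonnegativity of the weights on $[\tfrac{\pi}{3},\tfrac{2\pi}{3}]$, imports the radius of convergence of $G'$ directly from Theorem~\ref{th_CC} and is a legitimate shortcut: it gives the $\limsup$ without touching the observable or the bridge machinery, at the price of using Theorem~\ref{th_CC} as a black box rather than obtaining an independent proof. Your identification of $G'(x)=Z(u_1x,u_2x^{\ell_2},vx^{\ell_v},w_1x^2,w_2x^{2\ell_2})$ is also the correct bookkeeping.

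The soft spot is where you expect it, but it is not quite the one you name. A small correction first: when a doubly-visited rhombus is split between $\gamma_1$ and $\gamma_2$, the length does \emph{not} shift at all --- each of the two arcs carries its own length ($w_1$ contributes $2$, $w_2$ contributes $2\ell_2$, in both the joined and the split configuration), so the new length is exactly additive; only the weight changes, and \eqref{ineq1}--\eqref{ineq2} control that exactly as for $|\gamma|$. The genuine obstruction to ``re-running Fekete'' is elsewhere: when $\ell_2\ge 2$ or $\ell_v\ge 2$ the accumulated length along a walk increases in jumps, and a walk can only be cut at mid-edges, so a walk of length $n+m$ need not possess a prefix of length exactly $n$; the clean inequality $\tilde c_{n+m}\le\tilde c_n\tilde c_m$ fails as stated. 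Cutting at the first mid-edge where the accumulated length reaches at least $n$ yields only $c'_{n+m}\le\sum_{j=0}^{K-1}c'_{n+j}\,c'_{m-j}$ with $K=\max(\ell_2,\ell_v)$, and extracting the existence of $\lim\sqrt[n]{c'_n}$ from this weakened relation needs an extra argument --- either a generalized Fekete lemma for such shifted subadditivity, or, cleaner, bypass cutting altogether: bridge counts $b'_n$ are supermultiplicative (concatenated bridges occupy strips with disjoint interiors, so no rhombus is shared, weights multiply exactly and lengths add, up to the same endpoint adjustments as in Lemma~\ref{LemAtC}), the bound \eqref{ineq_BT} forces the bridge generating function to diverge at criticality, whence $\lim\sqrt[n]{b'_n}$ exists and equals $1$ in your normalisation, and $c'_n\ge b'_n$ gives $\liminf\sqrt[n]{c'_n}\ge 1$. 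To be fair, the paper's one-sentence proof glosses over exactly the same point, so your sketch is at the level of rigor of the published argument; but your claim that additivity of the length alone makes the subadditive structure transfer is precisely the step a referee would push back on.
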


\begin{remark}
The case~$\theta = \frac{\pi}{3}$ corresponds to the honeycomb lattice
and there is a way to define~$|\gamma|$ in such a way
that Theorem~\ref{th_different_length} computes the connective constant
of the honeycomb lattice (see Section~\ref{section_honeycomb}).
\end{remark}

The weights~\eqref{eq_x1c}-\eqref{eq_w2c} were discovered 
by B.~Nienhuis~\cite{N90} in~$1990$ as solutions of the Yang-Baxter equation. 
They were rediscovered by J.~Cardy and Y.~Ikhlef~\cite{CI} in~$2009$  
as the weights for which the parafermionic observable satisfies some particular equations.
For the connection between these two approaches, see~\cite{AB,IWWZ}.
See also~\cite{dGLR} for the weights on the boundary.
We should just mention here that in~\cite{N90} and~\cite{CI} 
a more general case is considered~--- the $O(n)$ model with~$n\in[-2,2]$ 
(the self-avoiding walk is a particular case of this model for~$n=0$). 
Unfortunately, the weights written there contain some minor misprints, 
so for completeness we include a correct version of the weights in Section~\ref{section_O(n)}.

In the case~$\theta = \frac{\pi}{2}$ the weights are symmetric, 
i.\,e. $u_1 = u_2$ and~$w_1 = w_2$.
One can view a walk as a self-avoiding walk on~$\mathbb{Z}^2$
which is allowed to touch itself but each time gets penalised by~$w_1/u_1^2 \approx 0.675$
and that gets penalised by~$v/u_1 \approx 0.785$ for each vertex it passes without a turn.
Theorem~\ref{th_CC}  confirms the conjecture~\cite{B} that the asymptotics of~$\sqrt[n]{\tilde{c}_n}$ 
is equal to
$$
\frac{1}{u_1(\pi/2)} = \sqrt{3+\frac{1}{2}\sqrt{26+7\sqrt{2}}} = 2.448\dots
$$
This is below the predicted~\cite{GE88} value $\approx 2.638$ for a connective constant of~$\mathbb{Z}^2$.

One can consider~$\theta < \tfrac{\pi}{3}$ (or~$\theta > \tfrac{2\pi}{3}$)
but the weight~$w_2$ (or~$w_1$) becomes negative,
so we do not address this question here.

Another interesting question is the value of the critical fugacities for
walks in a half-plane interacting with the boundary.
For the self-avoiding walk in the half-plane insertion of a fugacity means favouring each 
additional visit of the border.
One can define the critical fugacity as the value of the fugacity above which the self-avoiding
walk sticks to the border.
In~\cite{BBDDG} it was proven that
the critical fugacity for the self-avoiding on the hexagonal lattice is equal to~$1+\sqrt{2}$.
It would be natural to generalize this computation.
Though we conjecture that the same should hold, i.\,e. that
the critical fugacity is equal to~$1/(1-2u_1^2)$, we cannot prove this at the moment.

\begin{figure}
\centering
\includegraphics[scale=0.8]{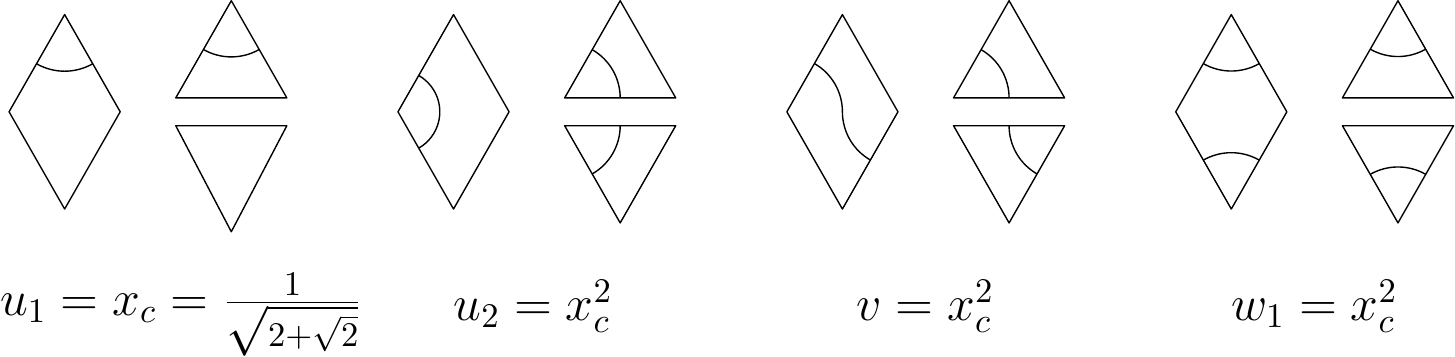}
\begin{subfigure}{.5\textwidth}
  \centering
  \includegraphics[scale=0.85,page=2]{saw_examples.pdf}
\end{subfigure}%
\begin{subfigure}{.5\textwidth}
  \centering
  \includegraphics[scale=0.85,page=3]{saw_examples.pdf}
\end{subfigure}
\caption{\emph{Top}: a bijection between local configurations on rhombi 
    with angle~$\tfrac{\pi}{3}$ and on equilateral triangles.
    Weights are indicated just below the corresponding mapping.
    A rhombus with two~$\tfrac{2\pi}{3}$-arcs is forbidden ($w_2 = 0$).
    \emph{Bottom}: a self-avoiding walk drawn on the triangular and the hexagonal lattices.
    Its length is equal to~17 (compare to fig.~\ref{figWeights}).}
\label{figRhombToHex}
\end{figure}

\section{Case~$\theta=\frac{\pi}{3}$ and the sketch of the proof}
\label{section_honeycomb}
For~$\theta=\frac{\pi}{3}$ we have~$u_1=\frac{1}{\sqrt{2+\sqrt{2}}}$,
$u_2=v=w_1=u_1^2$ and~$w_2=0$. 
This case is in direct correspondence with the self-avoiding walk on the honeycomb lattice,
and Theorem~\ref{th_different_length} specializes to~\cite{DS10}.
We can divide a rhombus with angle~$\frac{\pi}{3}$ into two equilateral triangles. 
Then, all possible states of a rhombus can be viewed as states
of these two triangles (see fig.~\ref{figRhombToHex}).
Note that walking on the faces of the triangular lattice is the same as
walking along the edges of its dual, i.e. of the hexagonal lattice.
Each triangle with an arc of a walk inside it corresponds to a vertex
of the hexagonal lattice visited by a walk.
It is easy to see that the weight of a walk is equal to~$\left(\frac{1}{\sqrt{2+\sqrt{2}}}\right)^{|\gamma|}$.
Therefore, we just obtained the self-avoiding walk on the hexagonal lattice
at criticality.

In order to get a natural length of a walk on the honeycomb lattice,
one needs to fix the length of a~$\tfrac{\pi}{3}$-arc at~1,
and fix the length of a~$\tfrac{2\pi}{3}$-arc and of a straight segment
at~2.\\

Now we turn to the general case~$\theta \in [\frac{\pi}{3},\frac{2\pi}{3}]$.
Let us give an outline of the proof stressing the differences from~\cite{DS10}:

--- In Section~\ref{section_parafermion} we define the parafermionic observable~$F_a$ in
exactly the same way as in~\cite{DS10}.
The only difference is that we consider a walk on a dual graph.

--- In Lemma~\ref{lemWeights} we show that for the weights~\eqref{eq_x1c}-\eqref{eq_w2c}
the parafermionic observable satisfies a part of the discrete Cauchy-Riemann equations~\eqref{CR}.
This means that the contour integral of~$F_a$ around each rhombus is~0.

--- In Lemma~\ref{lem_relation_parallelogramm} (corresponds to Lemma~2 in~\cite{DS10}) 
we sum up this relation over rhombi contained in 
a big parallelogram~$\Omega$ and obtain the relation~\eqref{eqTL} 
on the weights of walks going from the origin to different sides of~$\Omega$ (see fig.~\ref{figParal}).

--- In Lemma~\ref{lem_E_T_is_0} we show that for a 
long parallelogram~$\Omega$ of a fixed width~$T$
the contribution of all walks going to the top and bottom sides is negligible.

--- This leads to the relation~\eqref{eq_relation_strip} on the weights of arcs~$A_T(x_c)$ 
and bridges~$B_T(x_c)$ in a strip, where~$x_c = \tfrac{1}{u_1}$
(corresponds to~(5) in~\cite{DS10}).

--- One can decompose an arc into two bridges and from~\eqref{eq_relation_strip} 
get a lower bound on~$B_T(x_c)$.
This is done in the same way as in~\cite{DS10} but there is a couple of subtleties.
First of all, one is not allowed to do the symmetry around a line of a grid.
In fact, one does not need an axial symmetry, the central symmetry is enough (and this we have).
Another issue is that our walks are allowed to visit the same rhombus twice.
In particular, a walk can visit a rhombus one time before the splitting point and one time after,
and the weight of this walk will not be equal to the product of weights of two bridges.
We need just an upper bound in terms of bridges, so the inequalities~\eqref{ineq1}-\eqref{ineq2}
save the situation.
The last subtlety is that one needs to modify the endpoints of 
the bridges a little bit (see fig.~\ref{figPathsGamma12}).

--- This leads to~$Z(u_1,u_2,v,w_1,w_2) = \infty$, where the parameters are 
given by~\eqref{eq_x1c}-\eqref{eq_w2c} (critical weights).

--- In order to show that~$Z < \infty$ in the subcritical case we do the classical bridges decomposition.
One has to deal with a couple of subtleties that we have already mentioned.
This finishes the proof.

\section{Parafermionic observable and integrable weights}
\label{section_parafermion}
Throughout this section~$\theta \in [\tfrac{\pi}{3},\tfrac{2\pi}{3}]$.

To analyse the behaviour of the self-avoiding walk, 
we will use the~\emph{para\-fermionic} observable introduced in~\cite{S10}. 
Let~$\Omega$ be a parallelogram with angle~$\theta$ divided into
congruent rhombi (see fig.~\ref{figParal}).
Notations: 

--- $V(\Omega)$ is the set of all midpoints of the sides of the rhombi,

--- $V(\partial\Omega)$ is the set of points in~$V(\Omega)$ lying on~$\partial\Omega$
(boundary of~$\Omega$).

Pick points~$a\in V(\partial{\Omega})$ and~$z\in V(\Omega)$ and define:
\begin{align}
\label{eq_parafermion}
F_a(z)=\sum_{\gamma:a\to z}{\omega(\gamma)e^{-i\sigma W(\gamma)}}\, ,
\end{align}
where the sum runs over self-avoiding walks starting at~$a$ and ending at~$z$.
Above, $W(\gamma)$ denotes the winding of~$\gamma$, i.e. the angle of rotation
of~$\gamma$ going from~$a$ to~$z$ 
(a walk crosses all sides of the rhombi at the right angle).
For instance, the arc from~$z_{SE}$ to~$z_{SW}$ on figure~\ref{figCR} has winding~$\theta$
and the arc from~$z_{SE}$ to~$z_{NE}$ has winding~$\theta - \pi$.
The value~$\sigma$ will be fixed later. Observables for other models were introduced in~\cite{CI,CS,S}, see also~\cite{DS11} for a survey.

\begin{figure}[h!]
    \begin{center}
      \includegraphics[scale=0.8]{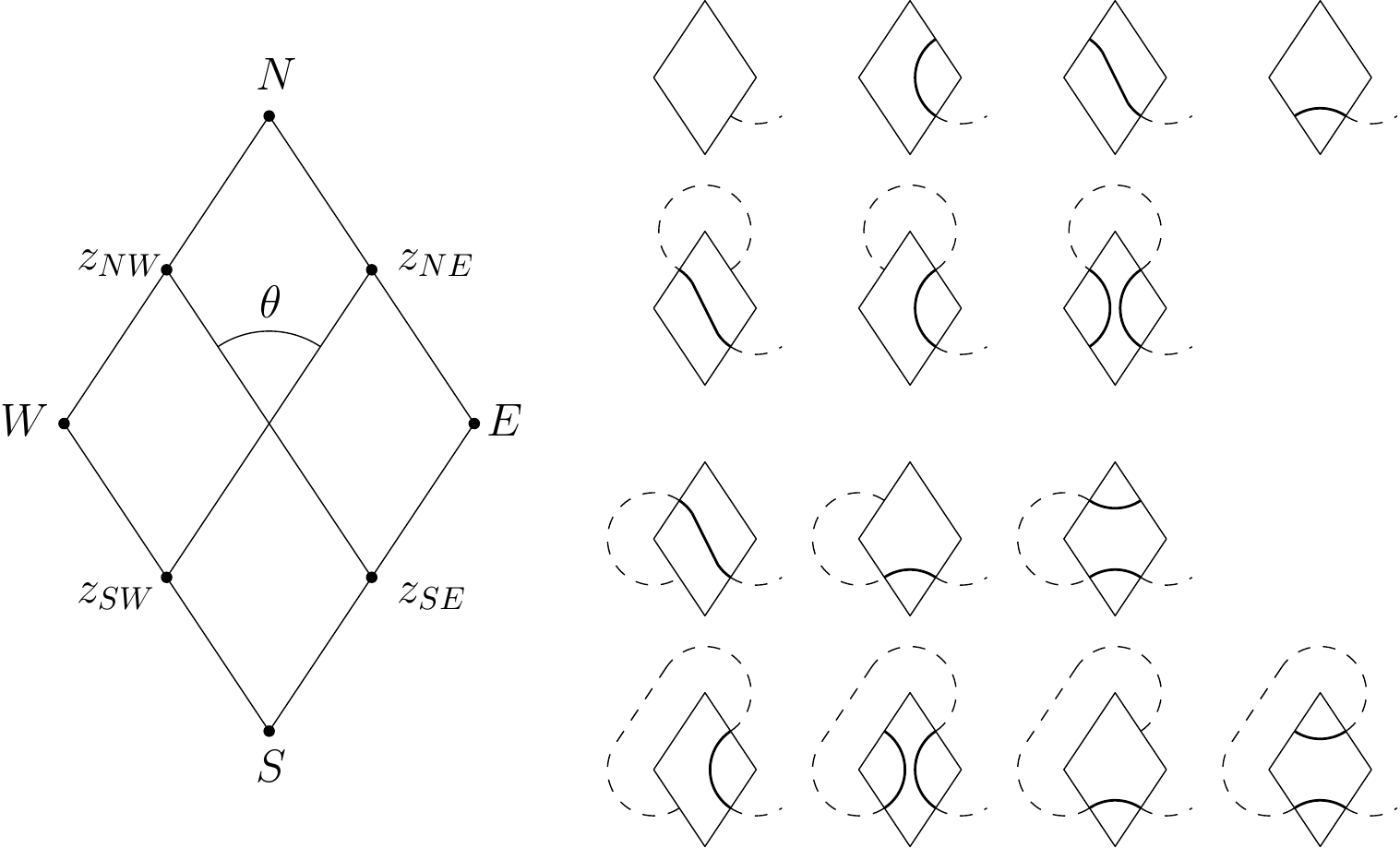}
    \end{center}
    \caption{
    \emph{Left:} A rhombus~$SENW$ with angle~$\theta$ and centres of the edges~$z_{SE}$, $z_{NE}$, $z_{NW}$, $z_{SW}$.
    \emph{Right:} Local changes of the path in different cases~--- each row of rhombi corresponds 
    to one of the equations~\eqref{eqLocal1}-\eqref{eqLocal4}.
    }
    \label{figCR}
\end{figure}

We will find the weights for which our observable satisfies a half of discrete
Cauchy-Riemann equation:
$$
F_a(z_{SE})-F_a(z_{NW})=e^{i\theta}(F_a(z_{SW})-F_a(z_{NE})),
$$
where~$SENW$ is any rhombus with angle~$\theta$ (see fig.~\ref{figCR}). 
We rewrite this equation:
\begin{align}\label{CR}
F_a(z_{SE})+e^{i\theta}F_a(z_{NE})-F_a(z_{NW})-e^{i\theta}F_a(z_{SW})&=0\, .
\end{align}

The other half which is missing is a similar relation around each vertex.

\begin{lemma}
\label{lemWeights}
If~$\sigma=\frac{\ell}{8}$, where~$\ell$ is some odd number, the 
unique weights such that $F_a(u_1,u_2,v,w_1,w_2)$ satisfies~\eqref{CR} 
are given by
\begin{align}
u_1&=\tfrac{1}{t}\sin\left[2\sigma\pi\right]\sin\left[(\sigma-1)(\pi-\theta)\right]\, ,
\label{eq_x1cS}\\
u_2&=\tfrac{1}{t}\sin\left[2\sigma\pi\right]\sin\left[(\sigma-1)\theta\right]\, ,
\label{eq_x2cS}\\
v&=\tfrac{1}{t}\sin\left[(\sigma-1)\theta\right]\sin\left[(\sigma-1)(\pi-\theta)\right]\, ,
\label{eq_vcS}\\
w_1&=\tfrac{1}{t}\sin\left[(\sigma-1)(\pi-\theta)\right]\sin\left[(\sigma-1)(2\pi+\theta)\right]\, ,
\label{eq_w1cS}\\
w_2&=\tfrac{1}{t}\sin\left[(\sigma-1)\theta\right]\sin\left[(\sigma-1)(3\pi-\theta)\right]\, ,
\label{eq_w2cS}
\end{align}
where~$t=\sin\left[(\sigma-1)(\pi+\theta)\right]\sin\left[(\sigma-1)(2\pi-\theta)\right]$.

If~$\sigma = 1$ then there is a one parameter family of weights such that
 $F_a(u_1,u_2,v,w_1,w_2)$ satisfies~\eqref{CR}:
 \begin{align}
 u_1 + u_2 &= 1\, , \label{eq_v0_u1_u2}\\
 w_1 &= u_1\, ,   \label{eq_v0_w1} \\
 w_2 &= u_2\, .  \label{eq_v0_w2}
 \end{align}
 
For all other values of~$\sigma$ the weights such that
$F_a(u_1,u_2,v,w_1,w_2)$ satisfies~\eqref{CR} exist only for some 
specific values of~$\theta$.
\end{lemma}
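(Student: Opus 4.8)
The plan is to verify \eqref{CR} directly, by reorganising the defining sum \eqref{eq_parafermion} into contributions localised at the rhombus $SENW$, and then to read off the weights as the unique solution of the local linear system thus produced. Write $S:=F_a(z_{SE})+e^{i\theta}F_a(z_{NE})-F_a(z_{NW})-e^{i\theta}F_a(z_{SW})$; the goal is $S=0$. I would group the walks contributing to the four endpoints into classes, declaring two walks equivalent when they trace the same edges outside the open rhombus, so that within a class the walks differ only in the arcs or the straight segment drawn inside $SENW$. Summing \eqref{eq_parafermion} first over each class turns $S=0$ into a finite family of \emph{local} identities, one per pattern of exterior attachment to the four mid-edges; these are precisely the relations \eqref{eqLocal1}--\eqref{eqLocal4} shown row by row in figure~\ref{figCR}.

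To make this concrete, consider the class of walks whose exterior is a single strand reaching $z_{SE}$. Such a walk may stop at $z_{SE}$ (empty rhombus, weight $1$, no extra winding), or continue straight to $z_{NW}$ (weight $v$, winding $0$), or arc to $z_{SW}$ around the vertex $S$ (weight $u_1$, winding $\theta$), or arc to $z_{NE}$ around the vertex $E$ (weight $u_2$, winding $\theta-\pi$), using the windings recorded in figure~\ref{figCR}. Weighting each completion by the prefactor $1,\,e^{i\theta},\,-1,\,-e^{i\theta}$ attached to its endpoint in $S$ and by the phase $e^{-i\sigma W}$, and cancelling the common exterior factor, the class contributes the single complex relation
\[
1-v-e^{i(1-\sigma)\theta}u_1+e^{i(1-\sigma)\theta}e^{i\sigma\pi}u_2=0 .
\]
The remaining classes are handled in the same way: the three other single-passage attachments produce companion equations in $u_1,u_2,v$, while the classes whose exterior already threads the rhombus once force a second arc and thereby bring in the double-arc weights $w_1,w_2$, their windings being sums of two arc-windings (hence the arguments near $2\pi$ appearing in \eqref{eq_w1cS}--\eqref{eq_w2cS}). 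The bulk of the work here is the exact bookkeeping of these windings.

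It remains to solve \eqref{eqLocal1}--\eqref{eqLocal4}. Each is a complex equation, so splitting into real and imaginary parts yields more real constraints than the five real unknowns $u_1,u_2,v,w_1,w_2$: the system is a priori overdetermined, and a solution valid for \emph{every} $\theta$ in the range can exist only if its $\theta$-dependence organises itself into the sine products of \eqref{eq_x1cS}--\eqref{eq_w2cS}. I would insert the ansatz that each weight equals $\tfrac1t$ times a product of two sines whose arguments are linear in $\theta$ with slope $\sigma-1$, and check that the leftover, $\theta$-independent phases cancel exactly when $\sigma$ is an odd multiple of $\tfrac18$; pinning down this condition precisely, and tracing the modulus $8$ to the half-turn phase $e^{i\sigma\pi}$ of the single arcs and the full-turn phase of the double arcs, is part of the computation. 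For such $\sigma$ the denominator $t=\sin[(\sigma-1)(\pi+\theta)]\sin[(\sigma-1)(2\pi-\theta)]$ does not vanish on $[\tfrac\pi3,\tfrac{2\pi}3]$, the imaginary parts become consequences of the real ones, and the rank-$5$ system has the unique solution \eqref{eq_x1cS}--\eqref{eq_w2cS}.

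Finally I would dispose of the exceptional cases. At $\sigma=1$ the slope $\sigma-1$ vanishes, every sine in the ansatz collapses and $t=0$, so the general formula is an indeterminate $0/0$; redoing the local equations directly with the phase $e^{-iW}$ shows that the straight-segment weight $v$ drops out of all constraints and survives as a free parameter, while the surviving relations reduce to \eqref{eq_v0_u1_u2}--\eqref{eq_v0_w2}, giving the announced one-parameter family. For every other $\sigma$ the residual phases fail to cancel identically in $\theta$, so the overdetermined system is consistent only on the discrete set of angles where the leftover trigonometric constraint happens to hold, as claimed. The main obstacle is the middle step: enumerating the exterior attachments and, above all, getting every winding for the single- and double-arc completions exactly right, since a single sign error in a winding corrupts the whole phase analysis and, with it, the identification of the condition $\sigma=\ell/8$.
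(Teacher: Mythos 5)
Your first half reproduces the paper's argument faithfully: you group walks by their trace outside the open rhombus $SENW$, reduce \eqref{CR} to the local relations \eqref{eqLocal1}--\eqref{eqLocal4} (your displayed relation for the single-strand class is exactly \eqref{eqLocal1} with $\lambda=e^{-i\sigma\theta}$, $\mu=e^{-i\sigma\pi}$), and note that conjugate equations (equivalently, real and imaginary parts) must hold as well. The genuine gap is in your treatment of $\sigma=1$, and it contradicts your own displayed equation. At $\sigma=1$ all phases are $\pm1$ and the local system becomes $u_1+u_2+v=1$ (this is your relation $1-v-e^{i(1-\sigma)\theta}u_1+e^{i(1-\sigma)\theta}e^{i\sigma\pi}u_2=0$ at $\sigma=1$, in which $v$ manifestly does \emph{not} drop out), together with $w_1=u_1+v$, $w_2=u_2+v$ from \eqref{eqLocal2}--\eqref{eqLocal3} and $w_1+w_2=u_1+u_2$ from \eqref{eqLocal4}. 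The last three equations force $v=0$, after which $w_1=u_1$, $w_2=u_2$, $u_1+u_2=1$. So $v$ is not ``a free parameter'': it is forced to vanish, and the free parameter of the family \eqref{eq_v0_u1_u2}--\eqref{eq_v0_w2} is the split of unit mass between $u_1$ and $u_2$. Your version would yield a two-parameter family (free $u_1$ \emph{and} free $v$) and a false lemma. In the paper's organization this is transparent: the system is equivalent to \eqref{eq_v_u2_w2}--\eqref{eq_v_u1_w1} plus the scalar relation $v(\mu^4+\bar\mu^4)=0$, i.e.\ $v\cos(4\sigma\pi)=0$, and at $\sigma=1$ one has $\cos(4\pi)\neq0$, so $\sigma=1$ lives on the $v=0$ branch.

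Relatedly, your route to the condition $\sigma=\ell/8$ --- inserting a sine-product ansatz and checking that leftover phases cancel --- establishes at best existence, whereas the lemma asserts uniqueness and a trichotomy in $\sigma$; your ``rank-$5$'' claim is precisely what requires proof. The paper's elimination supplies it: from \eqref{eq_v_u2_w2}--\eqref{eq_v_u1_w1} substituted into \eqref{eq_u1_u2_w1_w2} one gets \eqref{eq_v_sigma}, so either $v=0$ (which for all $\theta$ simultaneously forces $\sigma=1$, and otherwise only isolated $\theta$) or $\cos(4\sigma\pi)=0$, i.e.\ $\sigma=\ell/8$ with $\ell$ odd; in the latter case \eqref{eq_v_u2_w2}--\eqref{eq_v_u1_w1} and their conjugates express $u_1,u_2,w_1,w_2$ as explicit trigonometric multiples of $v$, and \eqref{eq_v_u1_u2} then becomes a single linear equation pinning down $v$ uniquely. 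Replace the ansatz-verification and the erroneous $\sigma=1$ paragraph by this elimination and the proof closes.
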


The weights~\eqref{eq_x1cS}-\eqref{eq_w2cS} give us the 
weights~\eqref{eq_x1c}-\eqref{eq_w2c} if one takes~$\sigma = \tfrac{5}{8}$.

\begin{proof}
Let us consider all paths contributing to Eq.~\eqref{CR} for a fixed rhombus~$SENW$. 
Consider walks visiting~$SENW$ first by~$z_{SE}$ 
(and possibly some other edges of~$SENW$ afterwards). They can be divided into several groups
such that walks in the same group differ only inside~$SENW$ (see fig.~\ref{figCR}):
\begin{itemize}
\item outside~$SENW$ the walk~$\gamma$ is just a path from~$a$ to~$z_{SE}$;
\item outside~$SENW$ the walk~$\gamma$ is  the union of a path from~$a$ to~$z_{SE}$ 
and a path between~$z_{NW}$ and~$z_{NE}$ (visited in any direction);
\item outside~$SENW$ the walk~$\gamma$ is the union of a path from~$a$ to~$z_{SE}$ and 
a path between~$z_{NW}$ and~$z_{SW}$ (visited in any direction);
\item outside~$SENW$ the walk~$\gamma$ is the union of a path from~$a$ to~$z_{SE}$ and a path 
between~$z_{SW}$ and~$z_{NE}$ (visited in any direction).
\end{itemize}
Note that if the total contribution of paths in each group is zero then~$F$ satisfies equation~\eqref{CR}. 
At the same time, in each of these groups, paths differ one from another only inside 
the rhombus~$SENW$. Hence, if the following equations hold, we obtain~\eqref{CR}:
\begin{align}
1+{\lambda}\bar{\mu} e^{i\theta}u_2-v-{\lambda}e^{i\theta}u_1&=0\, , \label{eqLocal1}\\
\lambda\bar{\mu}^2 e^{i\theta}v-\mu u_2-\lambda e^{i\theta}w_2&=0\, , \label{eqLocal2}\\
-\lambda\mu e^{i\theta}v-\bar{\mu} u_1+\lambda\bar{\mu} e^{i\theta}w_1&=0\, , \label{eqLocal3}\\
-\lambda\mu e^{i\theta}u_2-{\mu^2}w_2+\lambda\bar{\mu}^2 e^{i\theta}u_1-\bar{\mu}^2w_1&=0\, , \label{eqLocal4}
\end{align}
where~$\lambda=e^{-i\sigma\theta}$, $\mu=e^{-i\sigma\pi}$. 

Moreover, it is not difficult to see that if~\eqref{eqLocal1}-\eqref{eqLocal4}
are not satisfied then~\eqref{CR} fails for some rhombi.

Now, if we consider walks visiting~$SENW$ first by~$z_{SW}$, we get the
equations that are conjugates to~\eqref{eqLocal1}-\eqref{eqLocal4}
(i.e. the equations with all terms conjugated except the weights~$u_1$, $u_2$, $v$, $w_1$, $w_2$).
For walks visiting~$SENW$ first by~$z_{NW}$ (or~$z_{NE}$) one gets
the same equations as for walks visiting~$SENW$ first by~$z_{SE}$ (or~$z_{SW}$).

Solving this linear system, we obtain that either~$v=0$ or~$\sigma=\frac{\ell}{8}$ 
where~$\ell$ is some odd number. For each~$\sigma = \frac{\ell}{8}$ and~$\theta$ parameters, 
weights satisfying equations~\eqref{eqLocal1}-\eqref{eqLocal4} 
are given by~\eqref{eq_x1cS}-\eqref{eq_w2cS}.
Details are given in the Appendix.

If~$v=0$, the solution exists for each~$\theta$ if and only if~$\sigma = 1$. 
In this case~$w_1+w_2  = 1$, $u_1 = w_1$ and~$u_2 = w_2$.
\end{proof}

\section{Proofs of Theorems~\ref{th_CC}-\ref{th_different_length}}
\label{section_proof}
In order to have positive coefficients in~\eqref{eqTL} and 
to have the inequalities~\eqref{ineq1}-\eqref{ineq2}, we fix~$\sigma$ at the value~$\frac{5}{8}$
till the end of the paper.

In this case, weights given by~\eqref{eq_x1cS}-\eqref{eq_w2cS} 
can be rewritten as the weights given by~\eqref{eq_x1c}-\eqref{eq_w2c}. 
For~$\theta\in[\frac{\pi}{3},\frac{2\pi}{3}]$, all of them are non-negative and
\begin{align}
u_1^2&\ge w_1\label{ineq1}\, ,\\
u_2^2&\ge w_2\label{ineq2}\, .
\end{align}

For~$\theta\in (0, \frac{\pi}{3}) \cup (\frac{2\pi}{3}, \pi)$,
one of~$w_1$ and~$w_2$ is negative and 
one of the inequalities~\eqref{ineq1}-\eqref{ineq2} fails.

Take any~$\theta \in [\tfrac{\pi}{3},\tfrac{2\pi}{3}]$.

We call a self-avoiding walk a {\it bridge} (fig.~\ref{figParal}, walks from~$a$ to~$\beta$)
if it is contained in a strip
such that both endpoints of the walk are contained in different borders
of the strip and these borders go along the lines
of the grid (i.\,e. contain sides of rhombi).

Consider~$x > 0$.
Let us take~$x_c=u_1$, $u_1(x)=x$, $u_2(x)=x\cdot\frac{u_2}{x_c}$, 
$v(x)=x\cdot\frac{v}{x_c}$, $w_1(x)=x^2\cdot\frac{w_1}{(x_c)^2}$ 
and~$w_2(x)=x^2\cdot\frac{w_2}{(x_c)^2}$. 
Denote by~$\omega_x(\gamma)$ the weight of~$\gamma$ if the weights of the plaquets 
are~$x$, $u_2(x)$, $v(x)$, $w_1(x)$ and~$w_2(x)$.
One can observe that for any~$x$ and any self-avoiding walk~$\gamma$ 
of length~$n$ holds~$\omega_x(\gamma)=\left(\frac{x}{x_c}\right)^n\omega_c(\gamma)$, 
where~$\omega_c(\gamma)$ is the weight of~$\gamma$ for~$x=x_c$. 
In order to prove Theorem~\ref{th_CC}, we need to show that the radius of convergence 
of $Z(x)=Z(x,u_2(x),v(x),w_1(x),w_2(x))$ is~$x_c$. 

Now, let us consider a parallelogram~$\Omega$ with angles~$\theta$ and~$\pi-\theta$, 
and sides denoted by~$\alpha$, $\beta$, $\delta$, $\varepsilon$ (see fig.~\ref{figParal}). 
Let~$2L+1$ be the number of rhombi touching~$\alpha$ and~$T$ be the number of 
rhombi touching~$\delta$. The origin~$a$ will be in the middle of~$\alpha$. 
We will use the following notations:
\begin{align*}
A_{T,L}(x)&=\sum_{\gamma:a\to z\in \alpha}{\omega_x(\gamma)},
&B_{T,L}(x)&=\sum_{\gamma:a\to z\in \beta}{\omega_x(\gamma)},\\
D_{T,L}(x)&=\sum_{\gamma:a\to z\in \delta}{\omega_x(\gamma)},
&E_{T,L}(x)&=\sum_{\gamma:a\to z\in \varepsilon}{\omega_x(\gamma)}.
\end{align*}

\begin{figure}[ht]
    \begin{center}
      \includegraphics[scale=0.4]{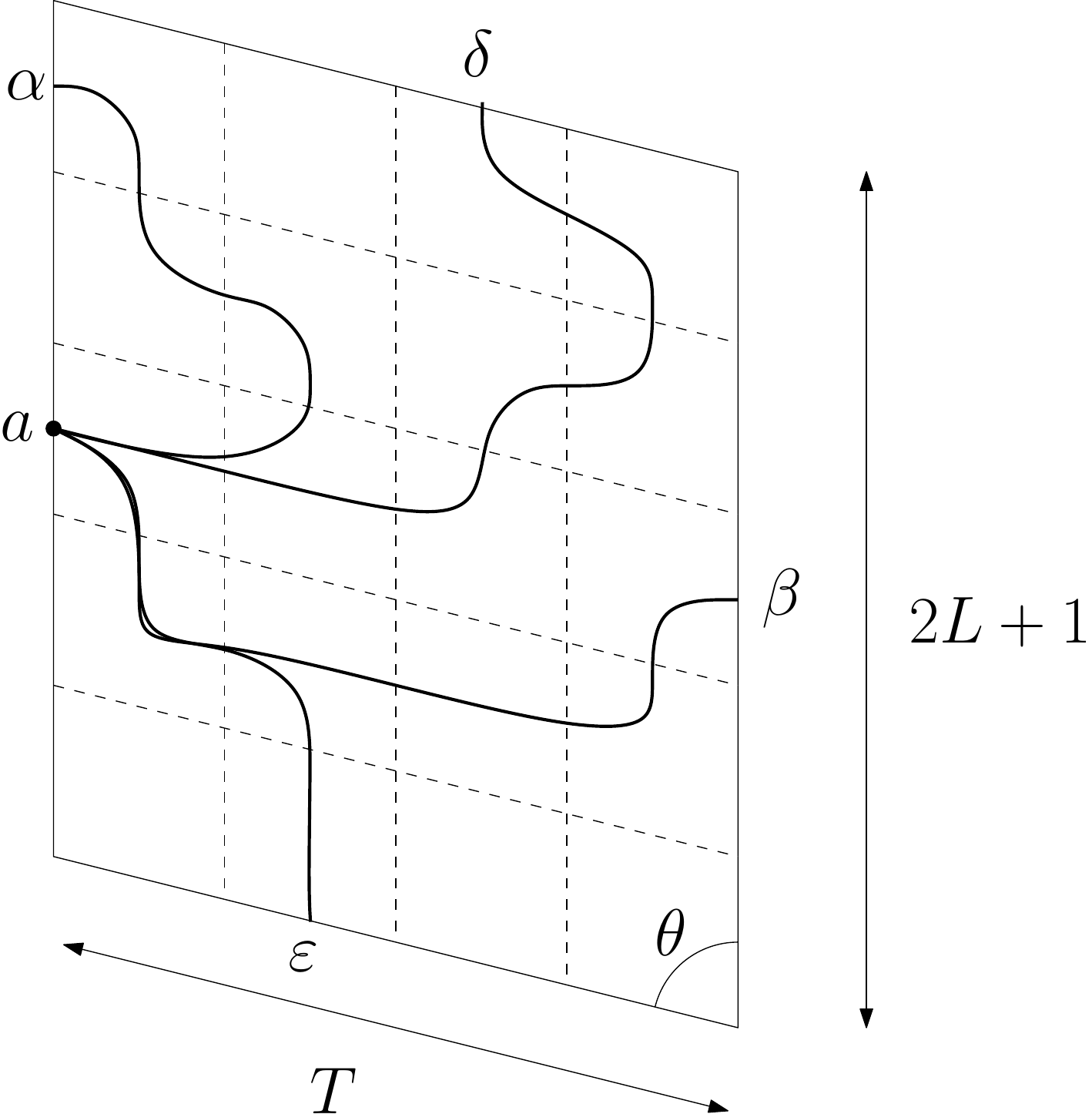}
    \end{center}
    \caption{Parallelogram~$\Omega$ and self-avoiding walks to 
    its sides~$\alpha$, $\beta$, $\delta$ and~$\varepsilon$.
    }
    \label{figParal}
\end{figure}

\begin{lemma}
\label{lem_relation_parallelogramm}
For~$c_\alpha=\cos\frac{3\pi}{8}$, $c_\delta=\cos(\frac{3}{8}\theta)$ 
and~$c_\varepsilon=\cos(\frac{3}{8}(\pi-\theta))$
\begin{align}
c_\alpha A_{T,L}(x_c) + B_{T,L}(x_c) + c_\delta D_{T,L}(x_c) + c_\varepsilon E_{T,L}(x_c)=1\, .\label{eqTL}
\end{align}
\end{lemma}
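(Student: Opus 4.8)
The plan is to read \eqref{CR} as the statement that a discrete contour integral of $F_a$ around every rhombus vanishes, and to sum this local identity over all the rhombi tiling $\Omega$ so that the interior terms telescope. By Lemma~\ref{lemWeights} (with $\sigma=\tfrac58$) every rhombus $SENW$ of $\Omega$ satisfies $F_a(z_{SE})+e^{i\theta}F_a(z_{NE})-F_a(z_{NW})-e^{i\theta}F_a(z_{SW})=0$. The key observation is that each mid-edge lying in the interior of $\Omega$ is shared by exactly two rhombi and occupies opposite positions in them: a side that is the ``NE'' edge of one rhombus is the ``SW'' edge of its neighbour, so it enters the two relations with the coefficients $+e^{i\theta}$ and $-e^{i\theta}$; likewise a common ``SE''/``NW'' edge enters with $+1$ and $-1$. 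Hence in the sum of all the rhombus identities every interior mid-edge cancels, and I am left with \[ \sum_{z\in V(\partial\Omega)}\kappa(z)\,F_a(z)=0, \] where $\kappa(z)\in\{\pm1,\pm e^{i\theta}\}$ is the coefficient inherited by $z$ from the unique rhombus of $\Omega$ that contains it.

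Next I would evaluate this boundary sum side by side. The decisive geometric fact is that the winding $W(z):=W(\gamma)$ of a walk $\gamma:a\to z$ with $z\in V(\partial\Omega)$ does not depend on $\gamma$ but only on $z$: since $\gamma$ crosses $\partial\Omega$ perpendicularly, closing $\gamma$ up along the boundary produces a simple closed curve whose turning number is $\pm2\pi$, and subtracting the fixed contributions of the boundary arcs and of the two right-angle corners pins $W(z)$ down. Consequently, because the weights are real, $\mathrm{Re}\big(\kappa(z)F_a(z)\big)=\mathrm{Re}\big(\kappa(z)e^{-i\sigma W(z)}\big)\sum_{\gamma:a\to z}\omega_{x_c}(\gamma)$, and the real prefactor $\mathrm{Re}\big(\kappa(z)e^{-i\sigma W(z)}\big)$ turns out to be constant along each side. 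A direct computation of $W(z)$ together with $\kappa(z)$ on a representative edge of each side --- using that $1-\sigma=\tfrac38$, so that the phases collapse to $\cos\big((1-\sigma)\phi\big)$ with $\phi=\pi,0,\theta,\pi-\theta$ for the sides $\alpha,\beta,\delta,\varepsilon$ --- yields exactly the constants $c_\alpha=\cos\tfrac{3\pi}8$, $c_\beta=1$, $c_\delta=\cos\tfrac{3\theta}8$ and $c_\varepsilon=\cos\tfrac{3(\pi-\theta)}8$. Thus, after taking the real part of the whole identity, the four side-groups become $c_\alpha A_{T,L}(x_c)$, $B_{T,L}(x_c)$, $c_\delta D_{T,L}(x_c)$ and $c_\varepsilon E_{T,L}(x_c)$.

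It remains to locate the constant on the right-hand side, which is forced by the source at $a$. The only walk ending at the starting mid-edge $a$ is the empty one, so $F_a(a)=1$ and its winding is $0$; because this winding differs from the value $\pm\pi$ carried by the non-trivial walks returning to $\alpha$, this term cannot be absorbed into $c_\alpha A_{T,L}$ and must be separated out. It contributes $\kappa(a)\cdot1$ to the boundary sum, and transferring it to the other side (after normalising by the real constant that makes the coefficient of $B_{T,L}$ equal to $1$) produces precisely the $1$ in \eqref{eqTL}; the remaining sums $A_{T,L},B_{T,L},D_{T,L},E_{T,L}$ then range over non-empty walks only, as in their definition.

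The step I expect to be the main obstacle is the one hidden in the second paragraph: checking that the real prefactors $\mathrm{Re}\big(\kappa(z)e^{-i\sigma W(z)}\big)$ are genuinely constant along each side and evaluate to the advertised cosines with the correct signs. This requires tracking the exact placement of the mid-edges on $\partial\Omega$, the direction in which each side is crossed, and the interaction between the factor $e^{i\theta}$ carried by $\kappa$ on the slanted sides and the winding. Everything else is the same bookkeeping as in Lemma~2 of~\cite{DS10}; the genuinely new feature here is that the two slanted sides $\delta,\varepsilon$ of the parallelogram --- absent in the honeycomb trapezoid --- produce the angle-dependent constants $c_\delta$ and $c_\varepsilon$, and one must verify that these are exactly $\cos\tfrac{3\theta}8$ and $\cos\tfrac{3(\pi-\theta)}8$.
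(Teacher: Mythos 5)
Your proposal is correct and takes essentially the same route as the paper's proof: sum the local relation~\eqref{CR} over all rhombi of~$\Omega$ (equivalently, the vanishing contour integral of~$F_a$ over~$\Omega$), use that the winding at each boundary mid-edge is deterministic, project the resulting complex identity onto a real relation whose coefficients are constant along each side, and let the empty walk at~$a$ supply the~$1$. The only cosmetic difference is that you take the real part of the sum with the coefficients~$\pm1,\pm e^{i\theta}$ inherited directly from~\eqref{CR}, whereas the paper takes the imaginary part of its edge-direction contour integral --- these differ by a global rotation of the same complex identity, and your prefactors~$\cos\bigl((1-\sigma)\phi\bigr)$ with~$\phi\in\{\pi,0,\theta,\pi-\theta\}$ are exactly~$c_\alpha$, $1$, $c_\delta$, $c_\varepsilon$.
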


\begin{proof}
The relation~\eqref{CR} means that the contour integral of~$F_a$
over any rhombus is~0 (by the contour integral we mean the sum
of the values of~$F_a$ along a counter-clockwise oriented contour multiplied by the direction of 
the corresponding edges).
Thus, the contour integral of~$F_a$ over the whole~$\Omega$ is~0.
For the points on the boundary we know the winding.
Taking the imaginary part, we get the desired relation on~$ A_{T,L}(x_c)$, $B_{T,L}(x_c)$,
$ D_{T,L}(x_c)$ and~$ E_{T,L}(x_c)$.
The~1 on the right side is the contribution of the empty walk.
\end{proof}

Note that all three coefficients~$c_\alpha$, $c_\delta$ and~$c_\varepsilon$ are positive.

\begin{remark}
In fact, one can obtain a similar equation for any~$k\in \mathbb{Z}$ 
and~$\sigma = \frac{2k+1}{8}$,
but the coefficients are not always positive.
Also, it is interesting that we are using only the imaginary part of the relation.
One can as well try to derive some information from its real part.
A small difficulty is that now walks to the boundary~$\alpha$ to the left
and to the right of the origin will get different coefficients.
\end{remark}

\begin{lemma}\label{lem_E_T_is_0}
For~$T$ fixed, $E_{T,L}(x_c) \to 0$ and~$D_{T,L}(x_c) \to 0$ as~$L\to \infty$.
\end{lemma}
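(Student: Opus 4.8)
The plan is to extract everything possible from the single identity~\eqref{eqTL} and then isolate the one genuinely analytic input. First I note that for the chosen $\sigma=\tfrac58$ and $\theta\in[\tfrac\pi3,\tfrac{2\pi}3]$ all five weights are non-negative, so every $\omega_c(\gamma)\ge0$ and hence $A_{T,L}(x_c)$, $B_{T,L}(x_c)$, $D_{T,L}(x_c)$, $E_{T,L}(x_c)$ are non-negative. Since the coefficients $c_\alpha,c_\delta,c_\varepsilon$ are positive and the four terms of~\eqref{eqTL} sum to~$1$, each term is at most~$1$; in particular $A_{T,L}(x_c)\le 1/c_\alpha$ and $B_{T,L}(x_c)\le 1$ uniformly in~$L$.

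Next I would establish monotonicity in~$L$. Enlarging the parallelogram $\Omega=\Omega_{T,L}$ from half-length $L$ to $L+1$ (keeping the width~$T$ fixed) only lengthens $\alpha$ and $\beta$ and adds rhombi: every arc $a\to z\in\alpha$ and every bridge $a\to z\in\beta$ living in $\Omega_{T,L}$ still lives in $\Omega_{T,L+1}$ with the same weight, while new walks of non-negative weight appear. Hence $A_{T,L}(x_c)$ and $B_{T,L}(x_c)$ are non-decreasing in~$L$; being bounded, they converge to finite limits $A_T(x_c)$ and $B_T(x_c)$. The sides $\delta,\varepsilon$ recede with~$L$, so no such monotonicity is available for $D_{T,L},E_{T,L}$, which is exactly the point. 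Feeding this back into~\eqref{eqTL} gives
\[
c_\delta D_{T,L}(x_c)+c_\varepsilon E_{T,L}(x_c)=1-c_\alpha A_{T,L}(x_c)-B_{T,L}(x_c),
\]
whose right-hand side tends to some $\ell\ge0$ as $L\to\infty$. As both terms on the left are non-negative, the lemma is \emph{equivalent} to $\ell=0$, i.e.\ to the strip relation $c_\alpha A_T(x_c)+B_T(x_c)=1$ (the analogue of~(5) in~\cite{DS10}).

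To prove $\ell=0$ I would dominate the far-side terms by a tail. A walk counted by $D_{T,L}$ or $E_{T,L}$ ends on a short side at horizontal distance~$L$ from the origin, so inside the width-$T$ strip it must traverse at least~$L$ consecutive columns and hence have length $\ge c(\theta)L$ for some geometric constant $c(\theta)>0$. Writing $Z_T(x_c)$ for the sum of $\omega_c(\gamma)$ over all self-avoiding walks $\gamma$ from~$a$ contained in the infinite strip $S_T$ of width~$T$, and using that the walks reaching $\delta$ and those reaching $\varepsilon$ are disjoint subsets of such walks, I obtain
\[
c_\delta D_{T,L}(x_c)+c_\varepsilon E_{T,L}(x_c)\ \le\ \max(c_\delta,c_\varepsilon)\sum_{|\gamma|\ge c(\theta)L}\omega_c(\gamma),
\]
which tends to~$0$ as soon as $Z_T(x_c)<\infty$. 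Thus the whole lemma reduces to the strip-confined generating function being finite at the bulk critical weight.

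Proving $Z_T(x_c)<\infty$ is, I expect, the main obstacle. One cannot simply compare with the full lattice: the comparison goes the wrong way and in any case the bulk series diverges at~$x_c$ (this is the content of the critical-weight step $Z=\infty$), so confinement must be used honestly. The natural route is a Hammersley--Welsh-type argument inside the strip: decompose a long walk at its successive last-exit columns into a concatenation of horizontal half-strip bridges and bound $Z_T(x_c)$ by a geometric series in the horizontal-bridge generating function, the point being to show that this generating function is strictly less than~$1$ at~$x_c$. Two features of our setting must be respected here, both already flagged in the outline. First, pieces cannot be spliced by reflecting across a vertical line of the grid, since this is not a symmetry of the skewed lattice; one should concatenate using only translations and the central symmetry. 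Second, a spliced walk may traverse a rhombus twice, so the weight of a concatenation is not exactly the product of the weights of its pieces — and here the inequalities~\eqref{ineq1}--\eqref{ineq2}, $u_1^2\ge w_1$ and $u_2^2\ge w_2$, are precisely what bounds a doubled passage by the product of two single passages and keeps the sub-multiplicative estimate intact. Once $Z_T(x_c)<\infty$ is in hand, the displayed tail bound forces $\ell=0$ and, by non-negativity, $D_{T,L}(x_c)\to0$ and $E_{T,L}(x_c)\to0$ separately.
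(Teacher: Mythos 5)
Your preparatory steps are sound and match the paper's framework: non-negativity of the weights, monotonicity of $A_{T,L}(x_c)$ and $B_{T,L}(x_c)$ in $L$, boundedness via~\eqref{eqTL}, and the reduction of the lemma to showing that $1-c_\alpha A_{T,L}(x_c)-B_{T,L}(x_c)\to 0$. The genuine gap is the final analytic input. You reduce everything to the finiteness of the strip partition function $Z_T(x_c)$, correctly identify this as ``the main obstacle'', and then only sketch a hoped-for Hammersley--Welsh argument whose decisive step --- that the generating function of \emph{horizontal} bridges in the strip is strictly less than~$1$ at~$x_c$ --- is asserted, not proved. Nothing in the paper's toolkit delivers this: the identity~\eqref{eqTL} controls bridges crossing the strip (it gives $B_T(x_c)\le 1$, and Lemma~\ref{LemAtC} even shows $B_T(x_c)\ge c/T$, so these quantities are not small), but it says nothing about pieces travelling \emph{along} the strip. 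Bounding $Z_T(x_c)$ by a geometric series in irreducible horizontal bridges requires their generating function to be $<1$, which is essentially the strict inequality between the strip and bulk critical points, $\mu(S_T)<1/x_c$ --- a nontrivial theorem even for the unweighted self-avoiding walk, and one that would need a separate argument for this weighted model. The inequalities~\eqref{ineq1}--\eqref{ineq2} handle the splicing of weights at doubly visited rhombi, but they do not produce the needed strict bound. As written, your proof therefore does not close.

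What you are missing is that $Z_T(x_c)<\infty$ is not needed at all: the paper derives the lemma directly from the monotonicity you already established, via an injection into the increment of $A$. Given a walk $\gamma$ ending on $\delta$ or $\varepsilon$, append one arc and at most $T-1$ straight segments laid in the row of rhombi that is added when $L$ is increased to $L+1$, carrying the endpoint across the strip to the side $\alpha$; the resulting walk $\tilde\gamma$ is self-avoiding (the new row is untouched by $\gamma$), is counted by $A_{T,L+1}(x_c)-A_{T,L}(x_c)$, determines $\gamma$, and satisfies $\omega_c(\tilde\gamma)\ge c_T\,\omega_c(\gamma)$ with $c_T=v^{T-1}\min(u_1,u_2)>0$. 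Hence $E_{T,L}(x_c)+D_{T,L}(x_c)\le c_T^{-1}\bigl(A_{T,L+1}(x_c)-A_{T,L}(x_c)\bigr)$, and since $A_{T,L}(x_c)$ is increasing in $L$ and bounded by~\eqref{eqTL}, its increments tend to~$0$. This one-paragraph argument replaces the entire unproven strip-subcriticality machinery on which your proposal rests.
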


\begin{proof}
Consider any walk~$\gamma$ contributing to~$E_{T,L}(x_c)$ or~$D_{T,L}(x_c)$.
Let~$\tilde{\gamma}$ be the walk ending at~$\alpha$ obtained from~$\gamma$ by adding
at the end one arc and several (at most~$T-1$) straight segments going leftwards.
It is easy to see that~$\omega_c(\tilde{\gamma}) \ge c_T \omega_c(\gamma)$,
where~$c_T =  v^{T-1}\mathrm{min} (u_1, u_2)$.

Note that~$\tilde{\gamma}$ contributes to~$A_{T,L+1}(x_c) - A_{T,L}(x_c)$
and~$\tilde{\gamma}$ determines~$\gamma$. Thus
\begin{align}
\label{eq_AED}
A_{T,L+1}(x_c) - A_{T,L}(x_c) \ge c_T  (E_{T,L}(x_c) + D_{T,L}(x_c))\, . 
\end{align}
Obviously, the left-hand side is positive and by~\eqref{eqTL} $A_{T,L}(x_c)$
is bounded by~1.
Thus, the left-hand side of~\eqref{eq_AED} tends to~0 as~$L$ tends to~$\infty$ (when~$T$ is fixed).
\end{proof}

For~$x\le x_c$ consider~$A_{T}(x)$ and~$B_T(x)$:
\begin{align*}
A_T(x)&=\lim_{L\to \infty}{A_{T,L}(x)}\, ,
&B_T(x)&=\lim_{L\to \infty}{B_{T,L}(x)}\, .
\end{align*}

These limits exist because~$A_{T,L}(x)$ and~$B_{T,L}(x)$ 
are increasing in~$L$ and bounded by~$1$ (see~\eqref{eqTL}).

We thus obtain
\begin{align}
\label{eq_relation_strip}
c_\alpha A_{T}(x_c) + B_{T}(x_c)&=1\, .
\end{align}

The walks counted in~$B_T$ are self-avoiding bridges of width~$T$.

\begin{lemma}\label{LemAtC}
The partition function is infinite at criticality: $Z(x_c)=\infty$.
\end{lemma}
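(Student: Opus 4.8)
The plan is to bound $Z(x_c)$ from below by the total weight of bridges and to show that this already diverges. Since every bridge is a self-avoiding walk from the origin and distinct widths give distinct walks, we have $Z(x_c)\ge\sum_{T\ge 1}B_T(x_c)$, so it suffices to prove $\sum_T B_T(x_c)=\infty$. I would first extract the monotone structure of \eqref{eq_relation_strip}: every arc staying in the strip of width $T$ also stays in the strip of width $T+1$ with the same weight, so $A_T(x_c)$ is non-decreasing in $T$, and by \eqref{eq_relation_strip} together with $B_T(x_c)\ge 0$ and $c_\alpha>0$ it is bounded above by $1/c_\alpha$. Hence $A_T(x_c)\nearrow A_\infty\le 1/c_\alpha$ and $B_T(x_c)=1-c_\alpha A_T(x_c)\searrow B_\infty=1-c_\alpha A_\infty\ge 0$. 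If $B_\infty>0$, the terms of $\sum_T B_T(x_c)$ do not tend to $0$ and the sum is already infinite; the content of the lemma is therefore the case $B_\infty=0$, in which $A_\infty=1/c_\alpha$.

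The core of the hard case is the decomposition of an arc into two bridges. Given an arc contributing to $A_T(x_c)$ — a walk from the origin back to $\alpha$ staying in the strip — I would cut it at a point of maximal height $t$: the portion before the cut is a bridge of width $t$, and the portion after the cut, read backwards, is another bridge of width $t$. Summing over arcs and over $t$ gives $A_\infty\le\sum_t B_t(x_c)^2\le\bigl(\sum_t B_t(x_c)\bigr)^2$, and with $A_\infty=1/c_\alpha$ this yields $\sum_t B_t(x_c)\ge 1/\sqrt{c_\alpha}>1$. Three points need care, exactly as in the outline. One must use central rather than axial symmetry to identify the reversed descending piece with an ordinary bridge. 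A single rhombus may be visited once before and once after the cut, so it contributes $w_1$ or $w_2$ to the arc while the product of the two bridge weights carries $u_1^2$ or $u_2^2$; the inequalities \eqref{ineq1}-\eqref{ineq2} guarantee $u_1^2\ge w_1$ and $u_2^2\ge w_2$, so the product dominates and the bound points the right way. Finally the endpoints of the two bridges must be modified slightly (see fig.~\ref{figPathsGamma12}).

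From the lower bound on the total bridge weight I would then conclude, following the bridge (concatenation) argument of \cite{DS10}, that in fact $\sum_T B_T(x_c)=\infty$, whence $Z(x_c)\ge\sum_T B_T(x_c)=\infty$. The main obstacle is precisely the bookkeeping in the arc decomposition: re-gluing the reversed descending bridge to the ascending one must be weight-non-increasing even when rhombi are shared between the two pieces, and it is here that \eqref{ineq1}-\eqref{ineq2} are essential. These inequalities hold only for $\theta\in[\tfrac{\pi}{3},\tfrac{2\pi}{3}]$, which is exactly why the statement is restricted to this range, and this is the step where the argument genuinely departs from the honeycomb computation of \cite{DS10}.
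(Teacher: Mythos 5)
Your setup is sound as far as it goes: the monotonicity of $A_T$ in $T$, the reduction to the case $B_\infty=0$ via \eqref{eq_relation_strip}, the use of central (rather than axial) symmetry to reverse the descending piece, and the role of \eqref{ineq1}--\eqref{ineq2} when a rhombus is shared by the two halves all match the paper's mechanics. But the final step is a genuine gap, and it is the whole content of the lemma. Cutting an arc at its point of maximal height $t$ yields only the aggregate bound $1/c_\alpha=A_\infty\le C\sum_t B_t(x_c)^2$ (with $C$ absorbing the endpoint-modification constant), hence a \emph{constant} lower bound on $\sum_t B_t(x_c)$. A constant lower bound, even one larger than $1$, is perfectly compatible with $\sum_T B_T(x_c)<\infty$ (take $B_T=(9/10)^T$, whose sum is $9$), and no concatenation argument can upgrade it: the map from tuples of bridges to their concatenation is not injective unless the pieces are irreducible, and already for unweighted walks the unique factorization into irreducible bridges shows that $\sum_T B_T>1$ only forces the irreducible-bridge mass above $\tfrac12$, not above $1$; moreover, in this weighted model \eqref{ineq1}--\eqref{ineq2} say precisely that a concatenated walk weighs \emph{at most} the product of the weights of its pieces, so concatenation inequalities point the wrong way for a lower bound. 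There is no such boosting step in \cite{DS10} either; there (and here) the bridge decomposition of \cite{HW} is used in the opposite direction, to prove $Z(x)<\infty$ for $x<x_c$.

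What the paper actually does is keep the difference structure: by \eqref{eq_relation_strip}, $B_T(x_c)-B_{T+1}(x_c)=c_\alpha\bigl(A_{T+1}(x_c)-A_T(x_c)\bigr)$, and $A_{T+1}-A_T$ is exactly the weight of arcs in the width-$(T+1)$ strip that touch the far wall. Cutting such an arc at its \emph{first} visit to the far boundary (not at the maximal height) produces two pieces that, after the endpoint modification of fig.~\ref{figPathsGamma12}, are bridges of the full width $T+1$, whence $A_{T+1}(x_c)-A_T(x_c)\le B_{T+1}(x_c)^2/(x_cu_2)$ and so $B_T(x_c)-B_{T+1}(x_c)\le \tfrac{c_\alpha}{x_cu_2}\,B_{T+1}(x_c)^2$. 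An induction on this recursion gives the quantitative bound $B_T(x_c)\ge \tfrac{1}{T}\min\bigl(B_1(x_c),c\bigr)$ with $c=x_cu_2/c_\alpha$, and divergence of $\sum_T B_T(x_c)$ then follows from the harmonic series. So your decomposition is the right mechanism, but it must be applied to the increments $A_{T+1}-A_T$, where both halves are bridges of the same width $T+1$; applied to $A_\infty$ with cuts at varying heights, it discards exactly the $1/T$ rate that makes the sum diverge. (Once the recursion is in place, your case distinction $B_\infty>0$ versus $B_\infty=0$ is also unnecessary.)
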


\begin{proof}
Note that $B_T(x_c)-B_{T+1}(x_c)=c_\alpha (A_{T+1}(x_c) - A_{T}(x_c))$. 
It is easy to see that~$A_{T+1}(x_c) - A_{T}(x_c)$ is the sum of weights of all the self-avoiding paths 
in the strip of width~$T+1$ beginning at~$a$ and ending on the left side of the strip, 
which also touch the right side. 
Each of these paths~$\gamma$ 
can be divided into a path~$\gamma_1$ from the left side of the strip 
to the right one and path~$\gamma_2$ from the right side 
of the strip to the left one  (see fig.~\ref{figPathsGamma12}). 
More precisely, path~$\gamma_1$ is
defined as the part of~$\gamma$ from~$a$ up to (and including) the first visit to the rhombi
on the right boundary of the strip,
and~$\gamma_2 = \gamma \setminus \gamma_1$.

At this point, one must be aware that the weight of~$\gamma$ is not the product of 
weights of~$\gamma_1$ and~$\gamma_2$ since rhombi containing two arcs 
of~$\gamma$ may contain one arc 
of~$\gamma_1$ and one arc of~$\gamma_2$. These rhombi contribute~$w_1$ 
(or~$w_2$) to~$\omega_c(\gamma)$ and~$u_1^2$ (or~$u_2^2$ resp.) 
to~$\omega_c(\gamma_1)\omega_c(\gamma_2)$. 
Nevertheless, the inequalities~\eqref{ineq1} and~\eqref{ineq2} 
imply that~$\omega_c(\gamma)\leq\omega_c(\gamma_1)\omega_c(\gamma_2)$.

Consider the last step of~$\gamma_1$. It is the only time when~$\gamma_1$ visits the right boundary
of the strip.
This implies that the last step in~$\gamma_1$ is either a $\theta$-arc
or a $(\pi-\theta)$-arc (see fig.~\ref{figPathsGamma12}):
\begin{itemize}
\item If the last step in~$\gamma_1$ is a $(\pi-\theta)$-arc, then we define~$\gamma_1'$
as the path obtained by  adding a $(\pi-\theta)$-arc at the end of~$\gamma_1$ 
and~$\gamma_2'$ as the path obtained by adding a $\theta$-arc 
in the beginning of~$\gamma_2$.
\item If the last step in~$\gamma_1$ is a $\theta$-arc, then we define~$\gamma_1'$
as the path obtained by adding a $\theta$-arc at the end of~$\gamma_1$ 
and~$\gamma_2'$ as the path obtained by adding a $(\pi-\theta)$-arc in the beginning 
of~$\gamma_2$.
\end{itemize}

\begin{figure}[ht]
    \begin{center}
      \includegraphics[scale=0.6]{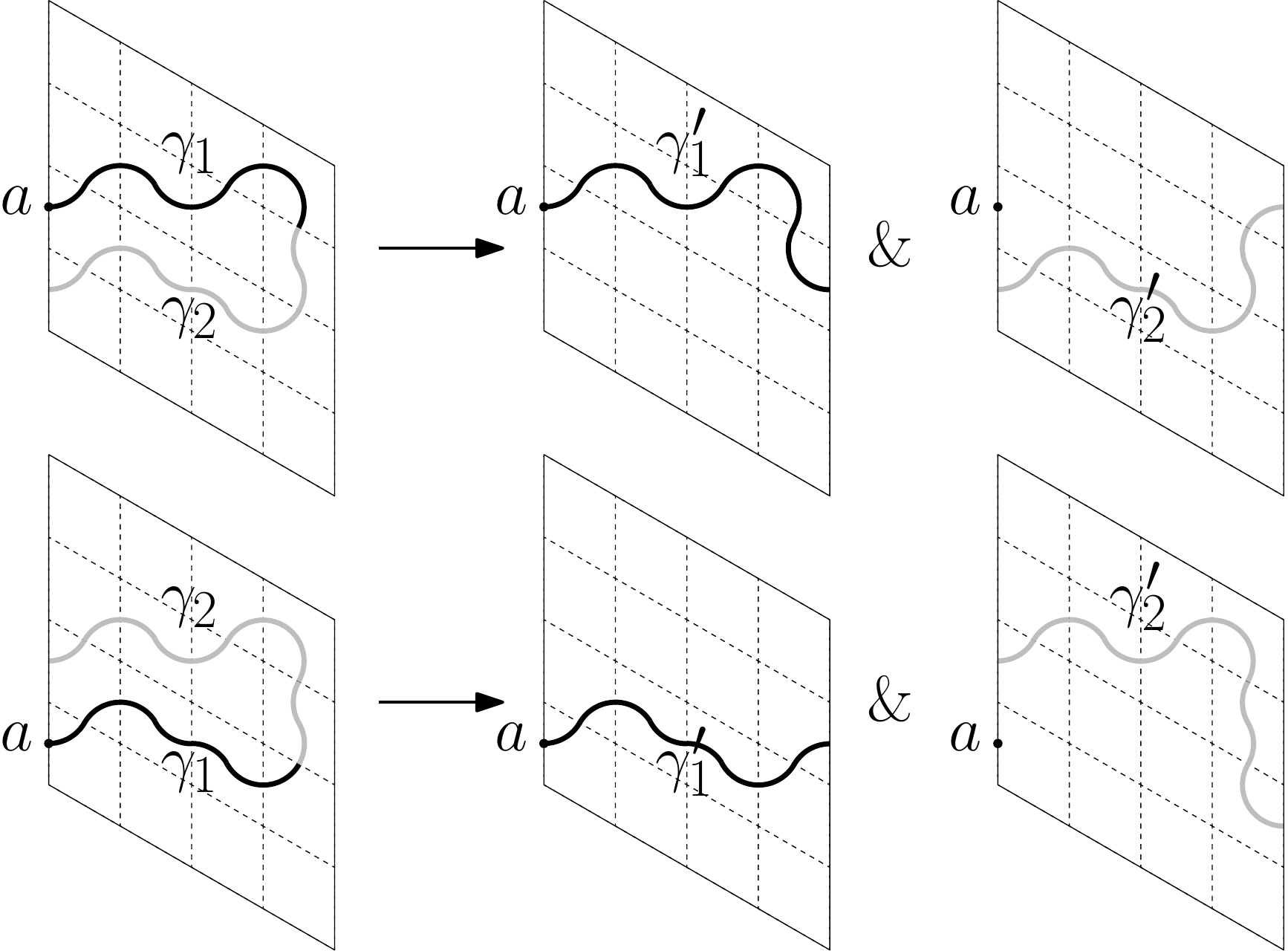}
    \end{center}
    \caption{Two different cases of splitting a walk into~$\gamma_1$ and~$\gamma_2$,
    paths~$\gamma_1'$ and~$\gamma_2'$ in each of the cases.
    }
    \label{figPathsGamma12}
\end{figure}

Paths~$\gamma_1'$ and~$\gamma_2'$ are self-avoiding bridges
of width~$T+1$ starting at some particular points~--- 
$\gamma_1'$ starts at~$a$ and~$\gamma_2'$ starts at the rhombus adjacent 
to the endpoint of~$\gamma_1'$. This leads to the inequality
\begin{align*}
A_{T+1}(x_c) - A_{T}(x_c)&\le (B_{T+1}(x_c))^2/(x_cu_2)\, .
\end{align*}

Using~\eqref{eq_relation_strip}, we obtain a lower bound on the growth of~$B_T(x_c)$:
\begin{align*}
B_T(x_c)-B_{T+1}(x_c)&\le \frac{c_\alpha}{x_cu_2}\cdot(B_{T+1}(x_c))^2\\
\frac{c_\alpha}{x_cu_2}\cdot(B_{T+1}(x_c))^2+B_{T+1}(x_c)&\ge B_T(x_c) \,.
\end{align*}

The last inequality leads to the following bound on~$B_{T+1}(x_c)$ in terms of~$B_T$:
\begin{align}
\label{ineq_BTBT}
B_{T+1}(x_c) \ge \frac{1}{2} (- c + \sqrt{c^2 + 4c B_T(x_c)})
& = \frac{B_T(x_c)}{\frac{1}{2} + \sqrt{\frac{1}{4} + \frac{B_T(x_c)}{c}}}\, ,
\end{align}
where~$c = \frac{x_cu_2}{c_\alpha}$. 
This gives the following bound on~$B_{T+1}(x_c)$:
\begin{align}
\label{ineq_BT}
B_T(x_c)&\ge \frac{1}{T}\min(B_1(x_c),c)\, .
\end{align}
The proof is done by induction, one just needs to use that 
the righthand side in~\eqref{ineq_BTBT} is increasing in~$B_T$ and
to check that the denominator there is not greater than~$\frac{T+1}{T}$
when~$B_T(x_c)$ is replaced by the righthand side of~\eqref{ineq_BT}.

Hence,~$Z(x_c)\ge\sum_{T}{B_T(x_c)}=\infty$ since the harmonic series diverges.
\end{proof}

We give the proof of Theorem~\ref{th_CC} below.

\begin{proof}
It is clear that~$Z(x)=Z(x,u_2(x),v(x),w_1(x),w_2(x))=\sum_{n\ge 0}{\tilde{c}_n x^n}$. 
Because of $\tilde{c}_n\ge \frac{1}{u_1^n} (u_1+v)^n$ (one can use only $\theta$-arcs and straight segments), 
and the submultiplicativity~$\tilde{c}_{n+m}\le \tilde{c}_n\tilde{c}_m$ 
(any path of length~$n+m$ can be divided into a path of length~$n$ and a path of length~$m$: 
the weight of original path is not greater than the product of weights of these 
shorter paths by~\eqref{ineq1}-\eqref{ineq2}), 
there exists~$\tilde{\mu}\in(0,\infty)$ such that~$\tilde{\mu}=\lim \sqrt[n]{\tilde{c}_n}$.

From Lemma~\ref{LemAtC}, we obtain~$Z(x_c)=\infty$. Thus~$\tilde{\mu}\ge x_c^{-1}$.
To get the upper bound on~$\tilde{\mu}$ we need to show that~$Z(x)<\infty$ for~$x<x_c$.

Let us consider~$x<x_c$. Any self-avoiding walk can be decomposed into self-avoiding
bridges, no three of which have the same height (see~\cite{HW}). 
In the original paper it was shown only for usual self-avoiding walks,
but the proof goes through without any changes also in the case
of weighted self-avoiding walks~--- decompose a walk into two half-space walks,
then for each of them pick a bridge of a maximal width, remove them,
note that we are left with two half-space walks of a smaller width,
continue by induction.
One just needs to modify bridges at their endpoints (in exactly the same way
as in Lemma~\ref{LemAtC}), so one gets an additional factor~$(u_1u_2)^{-1}$ 
that we denote by~$c$.

At the same time, the weight of the walk is not greater than the product of weights of these bridges. 
Hence
$$
Z(x)=Z(x,u_2(x),v(x),w_1(x),w_2(x))\le \prod_{T>0}{(1+cB_T(x)})^2\, .
$$
It is clear that~$B_T(x)\le(\frac{x}{x_c})^T\cdot B_T(x_c)\le(\frac{x}{x_c})^T$. 
Thus~$Z(x)\le\prod_{T>0}(1+(\frac{x}{x_c})^T)<\infty$. 
Hence,~$Z(x)<\infty$ for~$x<x_c$ and the proof is finished.
\end{proof}

Note that we never use our particular choice of the definition~$|\gamma|$,
so Theorem~\ref{th_different_length} can be proven along the same lines.

\section{Critical weights for the loop~$O(n)$ model.}
\label{section_O(n)}
We consider a loop representation of the loop~$O(n)$ model on any 
finite simply connected rhombic tiling. 
The configuration in each rhombus is one of those mentioned in fig.~\ref{figWeights} 
and we consider only the configurations which can be decomposed into several loops. 
In this case the weight of the configuration is:
$$
\omega(conf)=\prod_{\text{rhombus } r}{\omega(r)\cdot n^{\#\loops}}\, ,
$$
where~$\omega(r)$ is the weight of rhombus~$r$, i.\,e. either~1 or 
one of~$u_1$, $u_2$, $v$, $w_1$, $w_2$. 
We can also add boundary conditions~--- 
allow paths going from one particular edge on the boundary to another.

Now let us take any~$s$ and~$n=-2\cos{\tfrac{4\pi}{3}s}$.
We consider the following family of weights parametrized by~$s$ and angle~$\theta$ of the rhombus:
\begin{align}
u_1&=\tfrac{1}{t}\cdot\sin{(\pi-\theta)s}\cdot\sin{\tfrac{2\pi}{3}s}\, ,
\label{eq_u_1_n}\\
u_2&=\tfrac{1}{t}\cdot\sin{\theta s}\cdot\sin{\tfrac{2\pi}{3}s}\, ,
\label{eq_u_2_n}\\
v&=\tfrac{1}{t}\cdot\sin{\theta s}\cdot\sin{(\pi-\theta)s}\, ,
\label{eq_v_n}\\
w_1&=\tfrac{1}{t}\cdot\sin{(\tfrac{2\pi}{3}-\theta)s}\cdot\sin{(\pi-\theta)s}\, ,
\label{eq_w_1_n}\\
w_2&=\tfrac{1}{t}\cdot\sin{(\theta-\tfrac{\pi}{3})s}\cdot\sin{\theta s}\, ,
\label{eq_w_2_n}
\end{align}
where
$$
t=\frac{\sin^3{\tfrac{2\pi}{3}s}}{\sin{\tfrac{\pi}{3}s}}+
\sin{(\theta-\tfrac{\pi}{3})s}\cdot \sin{(\tfrac{2\pi}{3}-\theta)s}.
$$

The weights given by~\eqref{eq_u_1_n}-\eqref{eq_w_2_n} coincide with 
the weights given by~\eqref{eq_x1cS}-\eqref{eq_w2cS} for any~$\sigma=\frac{6k+5}{8}$,
where~$k\in\mathbb{Z}$, if one takes~$s=\sigma - 1$.
In particular,~$s=-\tfrac{3}{8}$ gives~\eqref{eq_x1c}-\eqref{eq_w2c}.

One can define~\cite{CI} the parafermionic observable for any~$n$ exactly in 
the same way as we did above for~$n=0$:
$$
F_a(z)=\sum_{\gamma:a\to z}{\omega(\gamma)e^{-i\sigma W(\gamma)}},
$$
where the sum runs over the configurations 
containing only loops and a path from~$a$ to~$z$,
$\omega(\gamma)$ stands for the weight of~$\gamma$ 
and~$W(\gamma)$ denotes the winding of a path in~$\gamma$ going from~$a$ to~$z$.

Another important tool is the Yang-Baxter equation (see~\cite{N90}).
Consider a symmetric equilateral hexagon~$H$. Note that there are two different 
ways to tile it by~3 rhombi.
Denote these two tilings by~$T_1$ and~$T_2$.
We say that the model satisfies the Yang-Baxter equation if for any fixed configuration
outside of~$H$ the sum of the weights of all its possible extensions to~$H$ 
is the same for tilings~$T_1$ and~$T_2$.

\begin{proposition}
Let~$s\in\mathbb{R}$ and take~$n=-2\cos{\tfrac{4\pi}{3}s}$.
Then the loop~$O(n)$ model with the weights given by~\eqref{eq_u_1_n}-\eqref{eq_w_2_n}
satisfies Yang-Baxter equation and the parafermionic observable~$F$ 
with spin~$\sigma = s+1$ satisfies the following equation on any rhombus~$SENW$
(see fig.~\ref{figCR}):
$$
F_a(z_{SE})+e^{i\theta}F_a(z_{NE})-F_a(z_{NW})-e^{i\theta}F_a(z_{SW})=0.
$$
\end{proposition}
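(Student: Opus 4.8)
The plan is to establish the two assertions of the proposition --- the Yang--Baxter equation and the discrete Cauchy--Riemann relation for~$F_a$ --- by upgrading the local analysis already carried out in the proof of Lemma~\ref{lemWeights}, the new ingredient being that configurations now carry closed loops, each weighted by~$n$. I would prove the parafermionic relation first, since it is the direct generalisation of what has been done, and treat the Yang--Baxter identity afterwards.

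For the parafermionic relation I would repeat the grouping argument of Lemma~\ref{lemWeights}: fix a rhombus~$SENW$ and sort the configurations contributing to $F_a(z_{SE})+e^{i\theta}F_a(z_{NE})-F_a(z_{NW})-e^{i\theta}F_a(z_{SW})$ according to how the strands connect \emph{outside}~$SENW$, so that within each class the configurations differ only by the arc pattern drawn inside the rhombus (see fig.~\ref{figCR}). It then suffices to see that each class contributes~$0$. The one difference from the~$n=0$ case is that replacing one internal arc pattern by another can merge two loops into one, split one loop into two, or open and close a loop, each such event multiplying the weight by~$n^{\pm 1}$. Tracking these events, the four balance relations \eqref{eqLocal1}-\eqref{eqLocal4} acquire explicit factors of~$n$ in exactly the terms where the connectivity of the surrounding strands changes; the two-arc configurations governing \eqref{eqLocal4} are precisely where a closed loop can be produced, so this is where~$n$ enters. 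Writing down these~$n$-deformed local equations is the heart of this half of the argument.

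The verification that the weights \eqref{eq_u_1_n}-\eqref{eq_w_2_n} solve the deformed equations is then a trigonometric computation. Substituting the spin~$\sigma=s+1$ into~$\lambda=e^{-i\sigma\theta}$ and~$\mu=e^{-i\sigma\pi}$ and expanding each product of sines by the product-to-sum formulas, I expect every local equation to collapse once one uses~$n=-2\cos\tfrac{4\pi}{3}s$; indeed this is exactly the value of~$n$ that makes the loop-carrying relation consistent, which is why it appears in the hypothesis. Since \eqref{eq_u_1_n}-\eqref{eq_w_2_n} coincide with \eqref{eq_x1cS}-\eqref{eq_w2cS} when~$s=\sigma-1$, the computation of Lemma~\ref{lemWeights} is recovered at~$n=0$, giving a useful consistency check.

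For the Yang--Baxter equation I would argue directly. Fixing the configuration outside the hexagon~$H$ amounts to prescribing how the (at most three) strands enter through its six outer edges; there are only finitely many such external connectivities. For each one I would compute the weighted sum over the fillings of~$H$ by three rhombi in the tiling~$T_1$ and, separately, in~$T_2$, weighting by~$n$ any loop closed inside~$H$, and then check that the two sums agree. Each case reduces to a polynomial identity in~$u_1,u_2,v,w_1,w_2$ and~$n$, to be verified from the explicit trigonometric form. I expect the main obstacle of the whole proposition to lie precisely in this loop bookkeeping --- keeping correct count, both in the local parafermionic relations and across the two hexagon tilings, of when a rearrangement creates or destroys a closed loop --- since this is the step with no analogue in the~$n=0$ treatment and the one that forces the relation~$n=-2\cos\tfrac{4\pi}{3}s$. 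Alternatively one could shorten part of this by invoking the known link, via the crossing symmetry of these weights, between the Yang--Baxter equation and the parafermionic relation (see~\cite{AB,IWWZ}); but the direct case check is self-contained and I would prefer it here.
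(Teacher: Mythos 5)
Your proposal is correct in substance, and for the parafermionic half it follows exactly the paper's intended route: the paper gives only a sketch at this point, stating that the relation ``can be done by local transformations in the same way as the proof of Lemma~\ref{lemWeights}'' with the caveat that there are more local configurations because of loops --- precisely your $n$-deformed versions of \eqref{eqLocal1}-\eqref{eqLocal4}. One small correction to your bookkeeping: closed loops are not produced only by the two-arc terms of \eqref{eqLocal4}. In the groups behind \eqref{eqLocal2}-\eqref{eqLocal4}, a single arc (or the straight segment) drawn inside the rhombus can close the external strand into a loop while the walk terminates at~$z_{SE}$, so factors of~$n$ appear in several terms of each local equation; your general principle --- a factor of~$n$ whenever the internal pattern closes an outside strand --- is the right one, and it is indeed these terms that force~$n=-2\cos\tfrac{4\pi}{3}s$, but the specific localisation to the two-arc configurations of \eqref{eqLocal4} is inaccurate and would need repair when the deformed equations are actually written out. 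Where you genuinely diverge from the paper is the Yang--Baxter half: the paper does not verify it at all, instead pointing to~\cite{N90}, where these weights were originally found as solutions of the Yang--Baxter equation, and to~\cite{AB} for the connection between the parafermionic relation and Yang--Baxter integrability. Your direct case-check over the finitely many external connectivities of the hexagon~$H$, comparing the fillings of the tilings~$T_1$ and~$T_2$ with internally closed loops weighted by~$n$, is more laborious but self-contained, and buys an independent verification of a statement the paper simply outsources; carried out in full, your plan would yield a more complete proof than the paper itself records.
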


The proof for the parafermionic observable can be done 
by local transformations in the same way as the proof 
of Lemma~\ref{lemWeights}. 
One should just keep in mind that there are more different local configurations in this case
because of loops.

For the connection between the parafermionic observable and the Yang-Baxter
relation see~\cite{AB}.

\begin{remark}
The weights are symmetric in~$\theta$~--- if one takes~$\pi-\theta$ instead of~$\theta$ 
then~$v$ is the same, $u_1$ and~$u_2$ are exchanged and~$w_1$ and~$w_2$ are exchanged.

One can see that for~$\theta=\tfrac{\pi}{3}$ and any~$s$ the weights can be factorized, 
i.\,e. $w_1=v=u_2=u_1^2$, $w_2=0$ and~$u_1=\pm\tfrac{1}{\sqrt{2\pm\sqrt{2-n}}}$. 
So this is just the loop~$O(n)$ model on the honeycomb lattice with the weight for each edge 
being equal to~$\pm\tfrac{1}{\sqrt{2\pm\sqrt{2-n}}}$ (see fig.~\ref{figRhombToHex}).
Nienhuis nonrigorously derived~\cite{N82} $\tfrac{1}{\sqrt{2+\sqrt{2-n}}}$ to be the critical 
value for the loop~$O(n)$ model on the honeycomb lattice.
\end{remark}

\renewcommand{\thesection}{A}
\setcounter{equation}{0}
\section{Appendix}

\subsection*{Computations in Lemma~\ref{lemWeights}}

Note that Eq.~\eqref{eqLocal1}, \eqref{eqLocal4} and the conjugates of 
Eq.~\eqref{eqLocal2}-\eqref{eqLocal3} can be rewritten in the following way:
\begin{align}
1+{\lambda}\bar{\mu} e^{i\theta}u_2-v-{\lambda}e^{i\theta}u_1&=0\, ,
\label{eq_v_u1_u2}\\
\bar{\mu}^2 (\lambda \bar\mu  e^{i\theta} u_2 + w_2) &= v\, ,
\label{eq_v_u2_w2}\\
\mu^2(-\lambda  e^{i\theta} u_1+ w_1 ) &= v\, ,
\label{eq_v_u1_w1}\\
\mu^2(\lambda\bar\mu e^{i\theta}u_2 + w_2)+\bar{\mu}^2( - \lambda e^{i\theta}u_1 + w_1)&=0\, .
 \label{eq_u1_u2_w1_w2}
\end{align}

It is easy to see that~\eqref{eq_v_u2_w2}-\eqref{eq_u1_u2_w1_w2}
are equivalent to~\eqref{eq_v_u2_w2}-\eqref{eq_v_u1_w1} plus the following relation:
\begin{align}
v(\mu^4 + \bar{\mu}^4) = 0.\label{eq_v_sigma}
\end{align}

First, consider the case~$v\ne 0$. Then~\eqref{eq_v_sigma} gives us the desired condition
on~$\sigma$:
$$
\cos(4\sigma\pi) = 0.
$$

Equations~\eqref{eq_v_u2_w2}-\eqref{eq_v_u1_w1} and their conjugates allow us to express
everything in terms of~$v$ and some trigonometric functions:
\begin{align*}
u_1 &= v\cdot \frac{\sin( - 2\sigma\pi)}{\sin((1 - \sigma)\theta)}\, , \\
u_2 &= v\cdot \frac{\sin( - 2\sigma\pi)}{\sin( \sigma\pi + (1 - \sigma) \theta )}\, ,\\
w_1 &= v\cdot \frac{\sin((1- \sigma )\theta - 2\sigma\pi)}{\sin((1 - \sigma)\theta)}\, ,\\
w_2 &= v\cdot \frac{\sin((1 - \sigma )\theta + 3\sigma\pi)}{\sin( \sigma\pi + (1- \sigma)\theta  )}\, .
\end{align*}

Then~\eqref{eq_v_u1_u2} gives us a linear equation on~$v$.
It is straightforward to check that the solution is unique and given by~\eqref{eq_x1cS}-\eqref{eq_w2cS}.

If~$v=0$, equations are transformed into
\begin{align*}
u_1 &=\lambda e^{i \theta} w_1\, , \\
u_2 &=-\lambda\bar{\mu}e^{i \theta} w_2\, , \\
w_1 + w_2 &= 1\, .
\end{align*}

We know that the conjugated equations should also hold.
Thus, in order to have a solution for each~$\theta$ we have to set~$\sigma = 1$.
In this case~$u_1=w_1$ and~$u_2 = w_2$.





\ACKNO{
I am grateful to Stanislav Smirnov for introducing me to the subject and sharing the ideas, 
to Dmitry Chelkak for many valuable and encouraging discussions, 
to Hugo Duminil-Copin for fruitful discussions and comments on the draft version of this paper. 
This research was supported by the NCCR SwissMAP, the ERC AG COMPASP, the Swiss NSF
and Chebyshev Laboratory at Saint Petersburg State University under the Russian Federation 
Government grant 11.G34.31.0026.}


\end{document}